\documentclass[12pt]{article}
\usepackage[utf8]{inputenc}
\usepackage[a4paper,top=3cm,bottom=3cm,left=3cm,right=3cm]{geometry}
\usepackage{amsmath, amsthm, amsfonts}
\usepackage[all,cmtip]{xy}
\usepackage{natbib}
\newtheorem{teo}{\textbf{Theorem}}[section]
\newtheorem{dfn}[teo]{\textbf{Definition}}
\newtheorem{ejem}[teo]{\textbf{Example}}
\newtheorem{prop}[teo]{\textbf{Proposition}}

\newtheorem{lem}[teo]{\textbf{Lemma}}

\providecommand{\keywords}[1]
{
  \small	
  \textbf{\textit{Keywords---}} #1
}

\title{Reflections and Sheafifications in Algebraic and Topological Categories}
\author{Julio César Hernández Arzusa\\ Hernán Giraldo\\ Samir Rivero Castro}
\begin{document}
\maketitle
\begin{abstract}
    
In this article, we develop an explicit categorical realization of sheafification based on colimits, products, and subobjects, emphasizing its behavior in algebraic and topological-algebraic settings. We prove that if $\mathcal{C}$ is a reflective subcategory of a category $\mathcal{A}$, then the presheaf category $\mathbf{Psh}(X,\mathcal{C})$ is reflective in $\mathbf{Psh}(X,\mathcal{A})$. We further investigate the interaction between reflections and sheafification, obtaining natural conditions under which these  constructions are naturally isomorphic.

\end{abstract}

\keywords{Presheaf, Sheaf, Reflection, Reflective subcategory, Topological group}

\thispagestyle{empty}

\section{Introduction}

Sheaf theory constitutes one of the central bridges between local and global phenomena in modern mathematics. Since its origins in algebraic topology and algebraic geometry, sheaves have become fundamental tools in homological algebra, category theory, logic, differential geometry, topological algebra, and mathematical physics. Their importance stems from the fact that they provide a categorical framework for encoding and reconstructing global structures from compatible local data.\\

From a categorical viewpoint, the passage from presheaves to sheaves is governed by the sheafification functor, which realizes the category of sheaves as a reflective subcategory of the category of presheaves. Classical treatments of sheafification can be found in \cite{godement1973topologie, tennison1975sheaf, maclane2012sheaves}, 
 while in \cite{kashiwara2006} was developed highly general existence results in exact and abelian categorical settings. However, many of these constructions remain abstract and are often difficult to adapt to topological-algebraic categories.\\

 In recent years, categorical sheaf theory has experienced significant development in several directions. Bunk studied homotopy sheaves on generalized spaces and analyzed how sheaf-theoretic structures behave under free cocompletions and Kan extensions, highlighting the deep interaction between presheaf categories and homotopical methods (see \cite{bunk2023homotopy}). Meanwhile, Barnes and Sugrue developed equivariant sheafification techniques for profinite groups, obtaining explicit constructions of equivariant presheaves and sheaves in topological-algebraic contexts (see \cite{barnes2022equivariant}).\\

 At the same time, recent advances have emphasized the role of reflective and coreflective subcategories in modern category theory. Structural characterizations of reflective subcategories in additive and pretriangulated settings were recently obtained in (see \cite{cortes2023reflective}), revealing new connections between reflection theory, localization, and categorical approximation methods. Related developments in higher and derived category theory include the study of reflective DG-categories and approximable triangulated categories (see \cite{goodbody2026approximable}).\\

 Another important recent direction concerns generalized sheaf representations and monoidal sheaf categories. Heunen and Barbosa proved representation theorems for monoidal categories via sheaves of local monoidal structures, extending classical Stone-type dualities and sheaf representations into a broad categorical setting (see \cite{barbosa2023sheaf}).\\

 Sheaves come naturally when you want to ``glue local information together to get global information", and this can be stated in a fairly general categorical context (see \cite{maclane2012sheaves}). The sheafification of a sheaf appears very generally in \cite{kashiwara2006}, the novel aspect of this work is provide a realization of the sheafification in terms of colimits, products, and subobjects.This approach does not make it possible to construct it in topological and topological-algebraic categories such as topological spaces, topological vector spaces, topological groups, and topological semigroups. We also provide an interaction between sheafifications  and reflections to obtain more reflective subcategories of a category presheaves.  Finding reflections is important because it allows us to approximate more complicated objects with those with desired properties in a universal and optimal way. In \cite{tkachenko2014axioms} several examples of reflective subcategories of the semitopological groups category appear. Whereas in \cite{castillo2021dual}, we can find examples of reflective subcategories of the category of topological groups.\\

Motivated by these advances, the purpose of this article is to study the interaction between sheafification and reflective subcategories in categories of presheaves with algebraic and topological-algebraic values. Our approach provides explicit realizations of sheafification using products, colimits, and subobjects, allowing the construction to be extended beyond classical algebraic categories into settings involving topological groups, compact Hausdorff spaces, compact semigroups, and related structures.

The paper is organized as follows.\\

In Section \ref{2}, we give the preliminaries concepts and in Section \ref{3},  we study reflective structures in categories of presheaves. More precisely, given a reflective subcategory $\mathcal{C}$ of a category $\mathcal{A}$, we prove that the presheaf category
$\mathbf{Psh}(X,\mathcal{C})$
is itself a reflective subcategory of
$\mathbf{Psh}(X,\mathcal{A})$.
We also describe explicitly the reflection functor induced at the presheaf level and analyze its universal properties.\\

Section \ref{1932} is devoted to the study of fibers of presheaves with values in algebraic and topological-algebraic categories. Using forgetful functors, we prove that the algebraic and topological structures of fibers can be recovered from their underlying set-theoretic constructions. In particular, we establish reconstruction results for presheaves with values in the categories  $\mathcal{GRP}$, $\mathcal{TOP}$, $\mathcal{TOPG}$,
and related algebraic categories. These results provide the technical foundation for the explicit construction of sheafifications developed later in the paper.\\

In Section \ref{5}, we construct sheafifications explicitly using products, colimits, and subobjects. We prove that this construction yields the sheafification functor in several algebraic categories, including groups, semigroups, modules, ordered structures, and related universal algebras. We then study the interaction between reflections and sheafification, obtaining natural conditions under which  $(r_\mathcal{C}F)^{+}$  and  $r_\mathcal{C}(F^{+})$
are naturally isomorphic. The final part of Section \ref{5} is devoted to compact topological and topological-algebraic categories. In particular, we analyze sheafifications in categories of compact Hausdorff spaces, compact topological semigroups, compact topological groups, and torsion-free compact groups. These examples illustrate how the categorical techniques developed throughout the article extend naturally to topological-algebraic settings where standard algebraic methods are no longer sufficient.

\section{Preliminaries}\label{2}

If $\mathcal{C}$ is a category, then $Ob(\mathcal{C})$ is the class of objects in $\mathcal{C}$. If $A,B\in Ob(\mathcal{C})$, then $Hom_{\mathcal{C}}(A,B)$ is the set of all morphisms from $A$ to $B$.
Let us denote by $\mathcal{SET}$, $\mathcal{GRP}$, $\mathcal{TOP}$ and $\mathcal{TOPG}$ the category of sets, groups, topological spaces, and topological groups, respectively.  If $(X, \tau)$ is a topological space, we denote by $\mathfrak{U}(X)$ the category whose objects are the open subsets of \( X \), and whose morphisms are the inclusion maps between open sets. That is, if \( U \) and \( V \) are open subsets of \( X \), there exists a morphism from \( U \) to \( V \) if and only if \( U \subseteq V \); in this case, the morphism is the inclusion of \( U \) into \( V \) ($\mathfrak{U}(X)$ is the category $\mathbf{PO}(\tau, \subseteq)$, see \cite[Example 2, page 3]{agore2023first}).\\
Let $X$ be a topological space and $\mathcal{C}$ a category, a presheaf of $X$ with values in $\mathcal{C}$ is a contravariant functor $F\colon \mathfrak{U}(X) \longrightarrow \mathcal{C}$. We denote by $\mathbf{Psh}(X, \mathcal{C})$ the category of presheaves of $X$ with values in $\mathcal{C}.$ If $U$ and $V$ are open sets in $X$, $U\subseteq V$ and $F$ is an object in $\mathbf{Psh}(X, \mathcal{C})$, there exists a unique $\mathcal{C}$-morphism $F_{U}^{V}\colon F(V)\longrightarrow F(U)$. Moreover, if $U,V,W$ are open sets in $X$ such that $U\subseteq V\subseteq W$, then $F_{U}^{V}F_{V}^{W}=F_{U}^{W}$.  Given $x\in X$, let us denote by $\mathcal{V}_x$, the set of open neighborhoods of $x$. If $\mathcal{C}$ has small colimits, let $F_x=Col_{V\in \mathcal{V}_x}F(V)$ (\( F_x \), which is called the fiber of \( F \) at \( x \), see \cite{tennison1975sheaf}), where $Col$  is the colimit. Then we have  the initial cocone  $\{\rho_{x, \mathcal{C}}^{F,V}\colon V\longrightarrow F_x\}_{V\in \mathcal{V}_x}$. Therefore, if $U, V\in \mathcal{V}_x$ and $U\subseteq V$, then the following diagram 

\begin{equation} \label{1931}\xymatrix{F(U)\ar[rr]^{\rho_{x,\mathcal{C}}^{U,F}} &  &  F_x\\
&   &\\
F(V)\ar[uurr]_{\rho_{x,\mathcal{C}}^{V,F}}\ar[uu]^{F_U^V} & }\end{equation} commutes.\\   We say $\mathcal{C}$ is a concrete category if the objects of $\mathcal{C}$ are sets endowed with some structures and the $\mathcal{C}$-morphism are functions that respect these structures.\\Let $\mathcal{C}$ be a concrete category. We say that $F\in Psh(X, \mathcal{C})$ is a sheaf of $X$ on  a concrete category $\mathcal{C}$ if it holds the following axioms for any open set \( U \subseteq X \) and any open cover \( \{ U_i \}_{i \in I} \) of \( U   \):

\begin{itemize}
    \item[1.] If $s_1, s_2\in F(U)$ and $F_{U_i}^U(s_1)=F_{U_i}^{U}(s_2)$ for all $i \in I$, then $s_1=s_2$.
    \item[2.] Suppose that we are given $(s_i)_{i\in I}\in \prod_{i\in I}F(U_i)$ such that $F_{U_i\cap U_j}^{U_i}(s_i)=F_{U_i\cap U_j}^{U_j}(s_j)$ for each $i,j\in I$. Then there exists $s\in F(U)$ such that  $F_{U_i}^{U}(s)=s_i$ for each $i \in I$.
\end{itemize}
We denote by $\mathbf{Sh}(X, \mathcal{C})$  the subcategory of $\mathbf{Psh}(X,\mathcal{C})$, whose objects are the sheaves.\\\\
Let $\mathcal{C}$ be a full subcategory of a category $\mathcal{A}$ and $A\in Ob(\mathcal{A})$.  A      $\mathcal{C}$-reflection of $A$ is $\mathcal{A}-$morphism $r_\mathcal{C}^A\colon A\longrightarrow r_\mathcal{C}A$, where $r_\mathcal{C}A\in Ob (\mathcal{C})$, such that for every $\mathcal{A}-$morphism $f\colon A\longrightarrow B$ with $B\in Ob (\mathcal{C})$, there exists a unique $\mathcal{C}-$morphism $r_\mathcal{C}(f)\colon r_\mathcal{C}A\longrightarrow B$ such that the following diagram 
$$\xymatrix{   A\ar[rr]^{r_\mathcal{C}^{A}} \ar[dd]^{f} &   &    r_\mathcal{C}A\ar[ddll]^{r_{\mathcal{C}}(f)}\\
&   &  \\
B}  $$ commutes.  It is easy to see that the $\mathcal{C}$-reflection is unique up to isomorphism. We say that $\mathcal{C}$ is a reflective subcategory of $\mathcal{A}$ if each $A\in Ob(\mathcal{A})$ has a $\mathcal{C}$-reflection. If $\mathcal{C}$ is a reflective subcategory of $\mathcal{A}$, then  give a $\mathcal{A}$-morphism, $f\colon A \longrightarrow B$, there exists a unique $\mathcal{C}-$morphism $r_\mathcal{C}(f)\colon r_\mathcal{C}A\longrightarrow r_\mathcal{C}B$
such that the following diagram 

$$\xymatrix{  A \ar[rr]^{f}\ar[dd]^{r_\mathcal{C}A}&    &   B\ar[dd]^{r_\mathcal{C}^{B}}\\
&    &\\
r_\mathcal{C}A\ar[rr]^{r_\mathcal{C}(f)}&     &r_\mathcal{C}B
}   $$ commutes. So $r\colon \mathcal{A}\longrightarrow \mathcal{C}$, defined by  $r_\mathcal{C}(f\colon A\longrightarrow B)=r_\mathcal{C}(f) \colon r_\mathcal{C}A\longrightarrow r_\mathcal{C}B,$ for each $\mathcal{A}$-morphism $f\colon A\longrightarrow B$, defines a  functor from $\mathcal{A}$ to $\mathcal{C}$, called the functor induced by reflection.\\\\

In \cite{kashiwara2006} it is proven that $\mathbf{Sh}(X, \mathcal{A})$ is a reflective subcategory of $\mathbf{Psh}(X, \mathcal{A})$ if $\mathcal{A}$ satisfies:
\begin{itemize}
    \item A admits small projective and small inductive limits.
    \item  Small filtrant inductive limits are exact.
    \item  A satisfies the IPC-property (see \cite[Defintion 3.1.10]{kashiwara2006}).
\end{itemize}


\section{Presheaves in reflective categories}\label{3}
Let $\mathcal{C}$ be a reflective subcategory of a category $\mathcal{A}$, $X$ a topological space and $F\in \mathbf{Psh}(X,\mathcal{A})$. Let $U$ and $V$ be open sets in $X$, with $U\subseteq V$. Then there exists a unique $\mathcal{C}$-morphism $r_\mathcal{C}(F_U^V)\colon r_\mathcal{C}F(U) \longrightarrow r_\mathcal{C}F(V)$ such that the following diagram $$\xymatrix{F(V)\ar[rr]^{F_U^{V}} \ar[dd]^{r_\mathcal{C}^{F(V)}} &     & F(U)\ar[dd]^{r_\mathcal{C}^{F(U)}} \\ &  &\\
r_\mathcal{C}F(V)\ar[rr]^{r_\mathcal{C}(F_U^V)}&    & r_\mathcal{C}F(U)\\
}    $$ commutes. Therefore, $r_\mathcal{C}F$ is a presheave in  $\mathbf{Psh}(X,  \mathcal{C})$. If we define $\theta_U^{F,\mathcal{C}}\colon F(U)\longrightarrow r_\mathcal{C}F(U)$, by $\theta_U^{F,\mathcal{C}}=r_\mathcal{C}^{F(U)}$, then we have that $\theta^{F,\mathcal{C}}\colon F\longrightarrow  r_\mathcal{C}F$ defines a natural transformation. Let $\theta\colon F\longrightarrow G$  be a natural transformation, where $G\in \mathbf{Psh}(X, \mathcal{C})$. For each $U$ open in $U$ there is a unique $\mathcal{C}$-morphism ${C}(\theta)_U\colon r_\mathcal{C}F(U)\longrightarrow r_\mathcal{C}G(U)$, such that  the following diagram 

$$\xymatrix{  F(U)\ar[rr]^{\theta_U}\ar[dd]^{r_\mathcal{C}^{F(U)}} &    &    G(U)\ar[dd]^{r_{\mathcal{C}}^{G(U)}}\\
&     &\\
r_\mathcal{C}F(U)\ar[rr]^{\mathcal{C}(\theta)_U} &    &r_\mathcal{C}G(U)}    $$ 
commutes. This proves that $\{\mathcal{C}(\theta)_U\colon r_\mathcal{C}F(U)\longrightarrow r_\mathcal{C}G(U): U \mbox{ is open in $X$}\}$ is a natural transformation and $\mathcal{C}(\theta)\circ \theta^{F,\mathcal{C}}=\theta.$  This can be summarized in the following theorem.

\begin{teo} \label{253}Let $\mathcal{C}$  be a reflective subcategory of $\mathcal{A}$.
    Let $F\in \mathbf{Psh}(X, \mathcal{A})$. Then $r_\mathcal{C}F\in \mathbf{Psh}(X,\mathcal{C})$, $\theta^{F,\mathcal{C}}\colon F\longrightarrow r_\mathcal{C}F$ is a natural transformation, such that for every natural transformation $\theta \colon F\longrightarrow  G$, where $G\in \mathbf{Psh}(X,\mathcal{C}) $, there exists a unique natural transformation $\mathcal{C}(\theta)\colon r_\mathcal{C}F\longrightarrow G$ such that $\mathcal{C}(\theta)\circ \theta^{F,\mathcal{C}}=\theta.$ Therefore, $\mathbf{Psh}(X, \mathcal{C})$ is a reflective subcategory of $\mathbf{Psh}(X, \mathcal{A})$.
\end{teo}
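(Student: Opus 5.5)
The plan is to prove the statement objectwise, by applying the universal property of the $\mathcal{C}$-reflection at each open set $U$, and then to check that everything is compatible with restriction maps. There are four things to verify: $r_\mathcal{C}F$ is a presheaf, $\theta^{F,\mathcal{C}}$ is natural, the induced $\mathcal{C}(\theta)$ is natural with $\mathcal{C}(\theta)\circ\theta^{F,\mathcal{C}}=\theta$, and $\mathcal{C}(\theta)$ is unique.

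\emph{Step 1: $r_\mathcal{C}F$ is a presheaf.} Set $(r_\mathcal{C}F)(U)=r_\mathcal{C}F(U)$ and $(r_\mathcal{C}F)_U^V=r_\mathcal{C}(F_U^V)$, the unique morphism with $r_\mathcal{C}(F_U^V)\circ r_\mathcal{C}^{F(V)}=r_\mathcal{C}^{F(U)}\circ F_U^V$. Functoriality follows from uniqueness in the reflection. For $U\subseteq V\subseteq W$, both $r_\mathcal{C}(F_U^V)\circ r_\mathcal{C}(F_V^W)$ and $r_\mathcal{C}(F_U^W)$ are $\mathcal{C}$-morphisms $r_\mathcal{C}F(W)\to r_\mathcal{C}F(U)$. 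Precomposed with $r_\mathcal{C}^{F(W)}$, each gives $r_\mathcal{C}^{F(U)}\circ F_U^W$, so they coincide. Similarly $r_\mathcal{C}(F_U^U)=\mathrm{id}$. Naturality of $\theta^{F,\mathcal{C}}$ is exactly the defining square of $r_\mathcal{C}(F_U^V)$.

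\emph{Step 2: existence of $\mathcal{C}(\theta)$.} Let $\theta\colon F\to G$ with $G\in\mathbf{Psh}(X,\mathcal{C})$. Since $G(U)\in Ob(\mathcal{C})$, the reflection gives a unique $\mathcal{C}(\theta)_U\colon r_\mathcal{C}F(U)\to G(U)$ with $\mathcal{C}(\theta)_U\circ r_\mathcal{C}^{F(U)}=\theta_U$. Here I would take the target to be $G(U)$ itself rather than $r_\mathcal{C}G(U)$, which is legitimate because $G(U)$ already lies in $\mathcal{C}$. For naturality, I compare $G_U^V\circ\mathcal{C}(\theta)_V$ and $\mathcal{C}(\theta)_U\circ r_\mathcal{C}(F_U^V)$, both maps $r_\mathcal{C}F(V)\to G(U)$. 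Precomposing with $r_\mathcal{C}^{F(V)}$, the first gives $G_U^V\theta_V$. The second gives $\mathcal{C}(\theta)_U\, r_\mathcal{C}^{F(U)} F_U^V=\theta_U F_U^V$. These agree by naturality of $\theta$, so uniqueness in the universal property of $r_\mathcal{C}^{F(V)}$ forces equality. Fullness of $\mathcal{C}$ in $\mathcal{A}$ guarantees that all composites involved are $\mathcal{C}$-morphisms. By construction $\mathcal{C}(\theta)\circ\theta^{F,\mathcal{C}}=\theta$ componentwise.

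\emph{Step 3: uniqueness.} If $\eta\colon r_\mathcal{C}F\to G$ is any natural transformation with $\eta\circ\theta^{F,\mathcal{C}}=\theta$, then $\eta_U\circ r_\mathcal{C}^{F(U)}=\theta_U$ for each $U$. Uniqueness in the reflection then gives $\eta_U=\mathcal{C}(\theta)_U$. Thus $\theta^{F,\mathcal{C}}$ is a $\mathbf{Psh}(X,\mathcal{C})$-reflection of $F$. Since $\mathbf{Psh}(X,\mathcal{C})$ is a full subcategory of $\mathbf{Psh}(X,\mathcal{A})$ (natural transformations between $\mathcal{C}$-valued presheaves are the same in either category, because $\mathcal{C}$ is full), reflectivity follows.

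\emph{Main obstacle.} No step is deep. The only point requiring care is the naturality check in Step 2: the same uniqueness trick has to be applied at the right object, namely at $r_\mathcal{C}^{F(V)}$ rather than at $U$. One must also note that this relies on $\mathcal{C}$ being full, so that composites built from $\mathcal{A}$-morphisms are admissible $\mathcal{C}$-morphisms.
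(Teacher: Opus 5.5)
Your proof is correct and follows the paper's argument: you apply the universal property of each $r_\mathcal{C}^{F(U)}$ objectwise, and use its uniqueness clause to get that $r_\mathcal{C}F$ is a presheaf and that $\theta^{F,\mathcal{C}}$ and $\mathcal{C}(\theta)$ are natural. The differences are cosmetic. The paper gives $\mathcal{C}(\theta)_U$ codomain $r_\mathcal{C}G(U)$, tacitly identifying it with $G(U)$, whereas you map directly into $G(U)$; you also write out the functoriality, naturality and uniqueness checks that the paper leaves implicit.
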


\section{Presheaves with values in $\mathcal{SET}$, $\mathcal{TOP}$, $\mathcal{GRP}$ and $\mathcal{TOPG}$} \label{1932}

In this section, we study the behavior of presheaves when we apply forgetful functors on $\mathcal{TOP}$, $\mathcal{GRP}$ or $\mathcal{TOPG}$.  Let  us note by  $U_1$ (forgets the topology structure) and $U_2$ (forgets the group structure) the forgetful functors   from $\mathcal{TOPG}$  to  $\mathcal{GRP}$ and  from $\mathcal{GRP}$ to $\mathcal{SET}$, respectively. 

The following proposition follows  from \cite[Propositions 4.2 and 3.1]{tennison1975sheaf}.

\begin{prop}\label{121024}
     Let $x\in X$. If $F\in \mathbf{Psh}(X, \mathcal{SET})$, then \begin{itemize}
\item[a)]  If $t\in F_x$, then there exists $U\in \mathcal{V}_x$ and $a\in F(U)$ such that $\rho_{x,\mathcal{SET}}^{U,F}(a)=t$.
\item[b)]  If $U,V\in \mathcal{V}_x$, $a\in F(U)$ and $b\in F(V)$. Then $\rho_{x,\mathcal{SET}}^{U,F}(a)=\rho_{x,\mathcal{SET}}^{V,F}(b)$ if and only if there exists an open set $W$ in $X$, with $W\subseteq U\cap V$ such that $F_W^{U}(a)=F_W^{V}(b)$.
\item[c)] If $F$ is a sheaf and $s_1, s_2\in F(U)$, where $U$ is open in $X$, then  $s_1=s_2$ if and only if $\rho_{x, \mathcal{SET}}^{U,F}(s_1)=\rho_{x, \mathcal{SET}}^{U,F}(s_2)$ for each $x\in U$.
    \end{itemize}
\end{prop}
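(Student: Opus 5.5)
The plan is to use the standard explicit model of colimits in $\mathcal{SET}$ over the directed poset $\mathcal{V}_x$ (ordered by reverse inclusion). This poset is directed because $U,V\in\mathcal{V}_x$ implies $U\cap V\in\mathcal{V}_x$. In $\mathcal{SET}$ the colimit of any small diagram is the quotient of the disjoint union $\bigsqcup_{V\in\mathcal{V}_x}F(V)$ by the equivalence relation generated by $(V,b)\sim(U,F_U^V(b))$ for $U\subseteq V$, with $\rho_{x,\mathcal{SET}}^{V,F}$ sending $b$ to its class. Since colimits are unique up to unique isomorphism compatible with the cocone, it suffices to prove a), b), c) for this model. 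Part a) is then immediate: every element of the quotient is the class of some $(U,a)$ with $U\in\mathcal{V}_x$ and $a\in F(U)$.

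For b), I will show that on a directed diagram the generated equivalence relation coincides with the relation $R$ defined by $(U,a)\,R\,(V,b)$ iff there is an open $W$ with $x\in W\subseteq U\cap V$ and $F_W^U(a)=F_W^V(b)$. Note that $W$ must contain $x$; I read the statement's $W$ as a member of $\mathcal{V}_x$, since otherwise $W=\emptyset$ would trivialize it.

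\begin{itemize}
\item $R$ contains the generating pairs: take $W=U$ and use $F_U^UF_U^V=F_U^V$.
\item $R$ is reflexive and symmetric.
\item $R$ is transitive: given witnesses $W_1$ for $(U,a)R(V,b)$ and $W_2$ for $(V,b)R(U',c)$, set $W=W_1\cap W_2\in\mathcal{V}_x$ and compose restrictions using functoriality $F_W^{W_i}F_{W_i}^{V}=F_W^{V}$.
\item Conversely, $R\subseteq\sim$, because $(U,a)\sim(W,F_W^U(a))=(W,F_W^V(b))\sim(V,b)$.
\end{itemize}

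Hence $R$ is exactly the kernel of the quotient map, which gives b). The transitivity check is the only step with any content, and it is routine.

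For c), the forward direction is trivial. For the converse, suppose $\rho_x^{U,F}(s_1)=\rho_x^{U,F}(s_2)$ for every $x\in U$. By b), for each $x\in U$ there is an open $W_x$ with $x\in W_x\subseteq U$ and $F_{W_x}^U(s_1)=F_{W_x}^U(s_2)$. The family $\{W_x\}_{x\in U}$ is an open cover of $U$, so sheaf axiom 1 gives $s_1=s_2$.

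The main subtlety, rather than an obstacle, is justifying that the chosen concrete quotient really is the colimit. This amounts to checking the universal property directly: a cocone $\{g_V\}$ into a set $S$ factors uniquely through the quotient because $g_U F_U^V=g_V$ makes it constant on generating pairs. Since the paper cites Tennison for this, I would state it briefly and cite the same source.
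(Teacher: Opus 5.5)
Your proof is correct and follows the approach the paper relies on: the paper gives no argument of its own and simply cites Tennison (Propositions 3.1 and 4.2), which are the directed-colimit description of stalks in $\mathcal{SET}$ and the germ-determination of sections via sheaf axiom 1 that you reproduce. Your reading of b) with $W\in\mathcal{V}_x$ (that is, $x\in W$) is the right one: the statement as printed omits this condition, and without it the ``if'' direction fails, since for a sheaf $W=\emptyset$ would always witness equality.
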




    Let $F$ be a presheave with values in  $\mathcal{TOPG}$, we can apply forgetful functors appropriately, so that it results in a presheaf $F_1$ in $\mathcal{SET}$. We will prove that it is possible to recover the structures, both algebraic and topological on $(F_1)_x$, such that it is topologically  isomorphic to $F_x$. A similar result is provided in \cite{godement1973topologie}, but only for groups.  In \cite{hernandez2020reflections}  an example is given of how algebraic semitopological structures are recovered after applying reflections on the  topological structure.

\begin{teo}\label{263}
    Let $F\in \mathbf{Psh}(X, \mathcal{GRP})$. For each $x\in X$, there exists a binary operation $\ast$ in $(U_2F)_x$  such that $((U_2F)_x, \ast)$ is a group isomorphic to $F_x$.
\end{teo}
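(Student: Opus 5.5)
We define the operation on $(U_2F)_x$ using the set-level description of the stalk in Proposition \ref{121024}, and then compare with $F_x$ through the universal property of the colimit in $\mathcal{GRP}$. Write $G=U_2F\in\mathbf{Psh}(X,\mathcal{SET})$ and $\rho^U=\rho^{U,G}_{x,\mathcal{SET}}$.

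\emph{Defining the operation.} Given $t_1,t_2\in G_x$, part a) of Proposition \ref{121024} gives $U_1,U_2\in\mathcal{V}_x$ and $a_i\in F(U_i)$ with $\rho^{U_i}(a_i)=t_i$. Put $U=U_1\cap U_2$ and define
$$t_1\ast t_2=\rho^{U}\bigl(F_U^{U_1}(a_1)\,F_U^{U_2}(a_2)\bigr).$$
Independence of the representatives is where part b) enters. Suppose $(b_i,V_i)$ are other representatives. Then there are open $W_i\subseteq U_i\cap V_i$ containing $x$ on which $a_i$ and $b_i$ restrict equally. Restricting both candidate products to $W=W_1\cap W_2$, and using that each $F_W^{U}$ is a group homomorphism together with functoriality $F_W^{U}F_U^{U_i}=F_W^{U_i}$, the two products coincide on $W$. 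By b), their images in $G_x$ agree. In the same way, the unit is $e=\rho^{X}(1_{F(X)})$, and the inverse is $t^{-1}=\rho^U(a^{-1})$. The group axioms then follow by pulling finitely many representatives back to a common neighbourhood, where they hold in the group $F(W)$. By construction, each $\rho^U\colon (F(U),\cdot)\to (G_x,\ast)$ is a group homomorphism, and the maps $\rho^U$ form a cocone over the diagram $V\mapsto F(V)$ in $\mathcal{GRP}$.

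\emph{Comparing with $F_x$.} By the universal property of $F_x=Col_{V\in\mathcal{V}_x}F(V)$ in $\mathcal{GRP}$, there is a unique homomorphism $\phi\colon F_x\to (G_x,\ast)$ with $\phi\circ\rho^{V,F}_{x,\mathcal{GRP}}=\rho^V$. To show $\phi$ is bijective, I plan to avoid computing $F_x$ directly. Instead, I show that $(G_x,\ast)$ with the cocone $\{\rho^V\}$ itself satisfies the universal property in $\mathcal{GRP}$; uniqueness of colimits then makes $\phi$ an isomorphism.

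So let $\{\psi_V\colon F(V)\to H\}$ be a cocone of group homomorphisms. Applying $U_2$ gives a cocone of sets, hence a unique function $\psi\colon G_x\to H$ with $\psi\rho^V=\psi_V$. It remains to check that $\psi$ is a homomorphism. Given $t_1,t_2$, choose representatives on a common $U$. Then
$$\psi(t_1\ast t_2)=\psi_U(a_1a_2)=\psi_U(a_1)\psi_U(a_2)=\psi(t_1)\psi(t_2).$$
Uniqueness of $\psi$ as a homomorphism follows from uniqueness at the level of sets.

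The main obstacle is the well-definedness step, which relies on the directedness of $\mathcal{V}_x$ (intersections of two neighbourhoods are neighbourhoods) and on b). The key conceptual point is that the forgetful functor $U_2$ creates these filtered colimits. If instead $F_x$ is to be identified concretely, the only delicate point is surjectivity of $\phi$, which is immediate from a). Injectivity then follows from b) applied to the kernel, or more simply from the universal-property argument above.
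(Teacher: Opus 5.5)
Your proof is correct and follows essentially the paper's route. You use the same representative-wise definition of $\ast$ with well-definedness from Proposition \ref{121024} b), the comparison homomorphism $F_x\to((U_2F)_x,\ast)$ from the universal property in $\mathcal{GRP}$, and an inverse coming from the set-level colimit. Your check that the function induced by an arbitrary group cocone is a homomorphism is exactly what the paper leaves as ``not hard to prove'' (that $g\circ U_2(f)$ is the identity), so your argument is more complete on that point. Your closing aside about injectivity via b) on the kernel is unnecessary, since it would require knowing that every element of $F_x$ comes from some $F(U)$; rely on the universal-property argument instead.
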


\begin{proof}
 First, we define $\ast$. Given $x_1,y\in U_2(F_x)$, by Proposition \ref{121024}, we can find $U,V\in \mathcal{V}_x$ such that $(a,b)\in U_2(F(U))\times U_2(F(V))$, $\rho_{x,\mathcal{SET}}^{U,U_2F}(a)=x_1$ and $\rho_{x,\mathcal{SET}}^{V, U_2F}(b)=y$. Let us define  
\begin{equation}
    x_1\ast y :=\rho_{x, \mathcal{SET}}^{U\cap V,U_2F}(F_{U\cap V}^{U}(a)F_{U\cap V}^{V}(b)).
\end{equation}

\textit{\textbf{Claim 1.}}  The operation $\ast$ is well defined.\\ Indeed,  suppose that $a=b\in (U_2F)_x$, by Proposition \ref{121024} there exist $U,V\in \mathcal{V}_x$ and $s_a$ y $s_b$ in $U_2F(U)$ and $U_2F(V)$, respectively, such that $a=\rho_{x,\mathcal{SET}}^{U, U_2F}(s_a)$ and $b=\rho_{x,\mathcal{SET}}^{V, F
(\mathcal{SET})}(s_b)$. Then, we can find an open set in $X$, $W$ such that $F_W^U(s_a)=F_W^{V}(s_b)$. Let $T\in \mathcal{V}_x$ and $s_c\in F(T)$, such that $c=\rho_{x, \mathcal{SET}}^{T,U_2F}(s_c)$. By definition, $a\ast c=\rho^{U\cap T, U_2F}_{x,\mathcal{SET}}(F_{U\cap T}^U(s_a)F_{U\cap T}^T(s_c))$ and $b\ast c=\rho^{V\cap T\cap U, U_2F}_{x, \mathcal{SET}}(F_{V\cap T}^V(s_B)F_{V\cap T}^T(s_c))$. To see that $a\ast c=b\ast c$, it is enough to prove that $\rho^{U\cap T, U_2F}_{x, \mathcal{SET}}(F_{U\cap T}^U(s_a)F_{U\cap T}^T(s_c))=\rho^{V\cap T\cap U,U_2F}_{x,\mathcal{SET}}(F_{V\cap T}^V(s_B)F_{V\cap T}^T(s_c))$. Note that $W\cap T\subseteq (U\cap T)\cap (V\cap T)$ and 

\begin{equation*}
    \begin{split}
        (U_2F)_{W\cap T}^{U\cap T}((U_2F)_{U\cap T}^U(s_a)(U_2F)_{U\cap T}^T(s_c))&=F_{W\cap T}^{U\cap T}(F_{U\cap T}^U(s_a)F_{U\cap T}^T(s_c))\\
        &=F_{W\cap T}^{U\cap T}(F_{U\cap T}^U(s_a))F_{W\cap T}^{U\cap T}(F_{U\cap T}^T(s_c))\\
        &=F_{W\cap T}^{U}(s_a)F_{W\cap T}^{T}(s_c)\\
        &=(U_2F)_{W\cap T}^{U}(s_a)(U_2F)_{W\cap T}^{T}(s_c).
    \end{split}
\end{equation*}

 Similarly,  $(U_2F)_{W\cap T}^{V\cap T}((F( \mathcal{SET}))_{V\cap T}^V(s_B)(U_2F)_{V\cap T}^T(s_c))=(U_2F)_{W\cap T}^{V}(s_a)(U_2F)_{W\cap T}^{T}(s_c)$. But $$(U_2F)_{W\cap T}^{U}(s_a)=(U_2F)_{W\cap T}^{W}((U_2F)_W^{U}(s_a))=(U_2F)_{W\cap T}^{W}((U_2F)_W^{V}(s_b))=(U_2F)_{W\cap T}^{V}(s_b).$$

 \begin{equation*}
    \begin{split}
    (U_2F)_{W\cap T}^{U\cap T}((U_2F)_{U\cap T}^U(s_a)(U_2F)_{U\cap T}^T(s_c))&= F_{W\cap T}^{U\cap T}(F_{U\cap T}^U(s_a)F_{U\cap T}^T(s_c))\\
    &=F_{W\cap T}^{U\cap T}(F_{U\cap T}^U(s_a))F_{W\cap T}^{U\cap T}(F_{U\cap T}^T(s_c)))\\
    &=F_{W\cap T}^{U}(s_a)F_{W\cap T}^{T}(s_c)\\
    &=(U_2F)_{W\cap T}^{U}(s_a)(U_2F)_{W\cap T}^{T}(s_c)\\
    &=(U_2F)_{W\cap T}^{V}(s_b)(U_2F)_{W\cap T}^{T}(s_c)\\
    &=F_{W\cap T}^{V}(s_b)F_{W\cap T}^{T}(s_c)\\
    &=F_{W\cap T}^{V\cap T}(F_{V\cap T}^V(s_b)) F_{V\cap T}^{T}(s_c))\\
    &=(U_2F)_{W\cap T}^{V\cap T}(F_{V\cap T}^V(s_b)) F_{V\cap T}^{T}(s_c)).
\end{split} 
\end{equation*}

Thus, by Proposition \ref{121024}, we have that $$\rho^{U\cap T, U_2F}_{x, \mathcal{SET}}(F_{U\cap T}^U(s_a)F_{U\cap T}^T(s_c))=\rho^{V\cap T,U_2F}_{x,\mathcal{SET}}(F_{V\cap T}^V(s_B)F_{V\cap T}^T(s_c)).$$ Analogously we can prove that $c\ast a=c\ast b$, what it proves that $\ast$ is well defined.\\
\textbf{\textit{Claim 2.}}  $\ast$ is associative. It is not hard to prove that if $a=\rho_{x,\mathcal{SET}}^{U,U_2F}(s_a)$, $b=\rho_{x,\mathcal{SET}}^{V,U_2F}(s_b)$ and $c=\rho_{x,\mathcal{SET}}^{W,U_2F}(s_c)$, with $U,V,W\in \mathcal{V}_x$, then $$a \ast (b \ast c)=\rho^{U\cap V\cap W, U_2F}_{x,\mathcal{SET}}(F_{U\cap V\cap W}^{U}(s_a)F_{U\cap V\cap W}^{V}(s_b)F_{U\cap V\cap W}^{W}(s_c))=(a \ast b) \ast c.$$\\
Let us denote by $e_V$ the neutral element of $F(V)$.\\

\textbf{\textit{Claim 3.}} If $V,U\in \mathcal{V}_x$, $\rho_{x,\mathcal{SET}}^{V,U_2F}(e_V)=\rho_{x,\mathcal{SET}}^{U,U_2F}(e_U)$. It follows from the fact that $$F_{U\cap V}^U(e_U)=e_{U\cap V}=F_{U\cap V}^{V}(e_V).
$$\\
\textbf{\textit{Claim 4.}} If $a\in (U_2F)_x$ and $U\in \mathcal{V}_x$,  $a\ast \rho_{x,\mathcal{SET}}^{U,U_2F}(e_U)=\rho_{x,\mathcal{SET}}^{U,U_2F}(e_U)a=a$. Indeed, suppose that $a=\rho_{x,\mathcal{SET}}^{V,U_2F}(s_a)$, where $V\in \mathcal{V}_x$ and $s_a\in (U_2F)(V) $. Then $a\rho_{x,\mathcal{GRP}}^{U,F}(e_U)=\rho_{x,\mathcal{SET}}^{V\cap U,U_2F}(F_{U\cap V}^V(s_a)F_{U\cap V}^U(e_U))=\rho_{x,\mathcal{SET}}^{V\cap U,U_2F}(F_{U\cap V}^V(s_a)e_{U\cap V})=\rho_{x,\mathcal{SET}}^{V\cap U,U_2F}(F_{U\cap V}^V(s_a))$. Note that $F_{U\cap V}^{U\cap V}(F_{U\cap V}$. Now, $F_{U\cap V}^{U\cap V}(F_{U\cap V}^{V}(s_a))=F_{U\cap V}^{V}(s_a)$, this proves that $$a\rho_{,\mathcal{SET}}^{U,U_2F}(e_U)=\rho_{U\cap V}^{x}(F_{U\cap V}^V(s_a))=\rho_{x,\mathcal{SET}}^{V,U_2F}=a.$$
Similarly, we have that $\rho_{x,\mathcal{SET}}^{U,U_2F}(e_U) \ast a=a$. \\

\textbf{\textit{Claim 5.}} If $a=\rho_{,\mathcal{SET}}^{V,U_2F}(s_a)$, where $V\in \mathcal{V}_x$ and $s_a\in F(V) $, then $a\rho_{x,\mathcal{SET}}^{V,U_2F}(s_a^{-1})=\rho_{x, \mathcal{SET}}^{V, U_2F}(s_a^{-1})a=\rho_{x,\mathcal{SET}}^{V,U_2F}(e_V)$. Indeed, $$a\rho_{x, \mathcal{SET}}^{V, U_2F}(s_a^{-1})=\rho_{x, \mathcal{SET}}^{V, U_2F}(F_V^{V}(s_a)F_V^{V}(s_a^{-1})=\rho_{x,\mathcal{SET}}^{V,U_2F}(s_as_a^{-1})=\rho_{x,\mathcal{SET}}^{V,U_2F}(e_V).$$ This proves that $a^{-1}=\rho_{x,\mathcal{SET}}^{V,U_2F}(s_a^{-1})$.\\
 By claims 1, 2, 3, 4 and 5, we have that  $((U_2F)_x, \ast)$ is a group. \\Now, let   $s_a,s_b\in F(U)$, put $a=\rho_{x,\mathcal{SET}}^{U,U_2F}(s_a)$ and $b=\rho_{x,\mathcal{SET}}^{U,U_2F}(s_b)$. Then $\rho_{x,\mathcal{SET}}^{U,U_2F}(s_a )\ast \rho_{x,\mathcal{SET}}^{U,U_2F}(s_b)=ab=\rho_U(F_U^{U}(s_a)F_U^{U}(s_b))=\rho_U^{x}(s_as_b)$. It means $=\rho_{x,\mathcal{SET}}^{U,U_2F}$ is a homomorphism for each $U\in \mathcal{V}_x$. The universal property implies that there exists a unique $\mathcal{GRP}$-morphism $f\colon F_x\longrightarrow ((U_2F)_x, \ast) $  such that for each $U\in \mathcal{V}_x$, the following diagram 
 $$\xymatrix{ F(U) \ar[rr]^{\rho_{x,\mathcal{SET}}^{U,U_2F}} \ar[dd]_{\rho_{x, \mathcal{GRP}}^{U,F}} &    &   ((U_2F)_x,\ast) \\
 &    & \\
 F_x\ar[uurr]_f &  
 }
$$ commutes. Similarly, we can find a unique $\mathcal{SET}$-morphism $g\colon (U_2F)_{x}\longrightarrow U_2(F_x$) such that the following diagram $$\xymatrix{  F(U) \ar[rr]^{\rho_{x,\mathcal{SET}}^{U,U_2F}} \ar[dd]_{\rho_{x, \mathcal{GRP}}^{U,F}} &    &   (U_2F)_x\ar[ddll]_g \\
 &    & \\
U_2( F_x)
}$$ commutes. It is not hard to prove that $g\circ U_2(f)$ is the identity morphism in $Hom_\mathcal{SET}(U_2(F_x), U_2(F_x))$ and $U_2(f)\circ g$ is the identity morphism in $Hom_{\mathcal{SET}}((U_2F)_x,(U_2F)_x)$. It means $f$ is a biyective homomorphism of groups, what it  proves that $((U_2F)_x, \ast)$ is isomorphic to $F_x$.
\end{proof}

The last statement in the proof of Theorem \ref{263} tells us that the  forgetful functor $U_2$ preserves the colimit $F_x^{\mathcal{GRP}}$, given that $(U_2F)_x^{\mathfrak{S}}$ is isomorphic to $U_2(F_x^\mathcal{GRP})$, but in general this is no longer true, it is because $U_2$ does not preserve coproducts (see \cite[Proposition 2.5.3]{agore2023first}).\\
Although the theorem above was stated only for groups, it can be extended to other universal algebras such as rings, semigroups, modules, etc.

\begin{teo}
If  $F\in Psh(X, \mathcal{TOPG})$ and $x\in X$, then  $(U_2U_1F)_x$ admits a topology $\tau$ making  it into a topological group, such that each $U\in \mathcal{V}_x$, $\rho_{x, \mathcal{GRP}}^{U, U_1F} \colon F(U)\longrightarrow ((U_2U_1F)_x,\tau)$ is a continuous homomorphism. Moreover $((U_2U_1F)_x,\tau)$ is 
If  $F\in Psh(X, \mathcal{TOPG})$, then  $(U_2U_1F)_x$ admits a topology $\tau$ making  it into a topological group, such that each $U\in \mathcal{V}_x$, $\rho_{x, \mathcal{GRP}}^{U, U_1F} \colon F(U)\longrightarrow ((U_2U_1F)_x,\tau)$ is a continuous homomorphism. Moreover, $((U_2U_1F)_x,\tau)$ is topologically isomorphic to $F_x$.
\end{teo}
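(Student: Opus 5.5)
The plan is to build the topology directly on the group $((U_2U_1F)_x,\ast)$ given by Theorem \ref{263}, applied to the presheaf $U_1F\in\mathbf{Psh}(X,\mathcal{GRP})$. Then I will show that the resulting topological group has the universal property of $Col_{V\in\mathcal{V}_x}F(V)$ in $\mathcal{TOPG}$. Write $G_x=((U_2U_1F)_x,\ast)$. By Theorem \ref{263}, $G_x$ is a group, and each map $\rho_U:=\rho_{x,\mathcal{SET}}^{U,U_2U_1F}\colon U_1F(U)\to G_x$ is a homomorphism. Moreover, $G_x$, with these maps, is a colimit of $U_1F$ over $\mathcal{V}_x$ in $\mathcal{GRP}$.

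\textbf{Step 1 (the topology).} Let $\Sigma$ be the set of group topologies $\sigma$ on $G_x$ such that every $\rho_U\colon F(U)\to (G_x,\sigma)$ is continuous. The indiscrete topology lies in $\Sigma$, so $\Sigma\neq\emptyset$. Let $\tau=\sup\Sigma$, meaning the topology generated by $\bigcup\Sigma$. I will use the standard fact that the supremum of a family of group topologies is again a group topology. The reason is that $(G_x,\tau)$ embeds diagonally into $\prod_{\sigma\in\Sigma}(G_x,\sigma)$ as a subgroup, with the initial topology. Continuity of each $\rho_U$ for $\tau$ holds because a map into a space carrying an initial (supremum) topology is continuous iff it is continuous for every $\sigma\in\Sigma$. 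So $\tau$ is the finest group topology making all the $\rho_U$ continuous homomorphisms, which gives the first assertion of the theorem.

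\textbf{Step 2 (universal property).} Let $\{\phi_U\colon F(U)\to H\}_{U\in\mathcal{V}_x}$ be a cocone in $\mathcal{TOPG}$. Forgetting topologies, we get a cocone of groups. Since $G_x$ is the colimit in $\mathcal{GRP}$, there is a unique homomorphism $h\colon G_x\to U_1H$ with $h\rho_U=\phi_U$. It remains to see that $h$ is continuous for $\tau$. Consider the initial topology $h^{-1}(\tau_H)=\{h^{-1}(O): O \mbox{ open in } H\}$. It is a group topology on $G_x$, since it is the preimage of a group topology under a homomorphism. Each $\rho_U$ is continuous for it, because $h\rho_U=\phi_U$ is continuous. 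Hence $h^{-1}(\tau_H)\in\Sigma$, so $h^{-1}(\tau_H)\subseteq\tau$, and therefore $h$ is continuous. Uniqueness in $\mathcal{TOPG}$ follows from uniqueness in $\mathcal{GRP}$.

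\textbf{Step 3 (conclusion).} Thus $((U_2U_1F)_x,\tau)$ together with $\{\rho_U\}$ is a colimit of $F$ over $\mathcal{V}_x$ in $\mathcal{TOPG}$. By uniqueness of colimits, it is topologically isomorphic to $F_x$, via the comparison map commuting with the cocones.

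\textbf{Expected obstacle.} The main point needing care is Step 1, namely that $\sup\Sigma$ is a group topology. I will argue this through the diagonal embedding into the product, as indicated there. A conceptual alternative is to note that $U_1$ has a right adjoint (the indiscrete topology), so $U_1$ preserves colimits. Then $U_1(F_x)\cong (U_1F)_x\cong G_x$ by Theorem \ref{263}, and one can simply transport the topology of $F_x$ along this bijection. This alternative presupposes that colimits in $\mathcal{TOPG}$ exist, which Steps 1 and 2 establish directly anyway.
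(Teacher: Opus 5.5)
Your proof is correct, but it runs in the opposite direction from the paper's.

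The paper takes the $\mathcal{TOPG}$-colimit $F_x$ as given; in the preliminaries, fibers are only defined for categories with small colimits. It forms the canonical comparison homomorphism $f\colon (U_1F)_x\to U_1(F_x)$ and equips $(U_2U_1F)_x$ with the initial topology induced by $f$. It then gets a continuous homomorphism $g\colon F_x\to((U_2U_1F)_x,\tau)$ from the universal property in $\mathcal{TOPG}$. The uniqueness clauses in $\mathcal{TOPG}$ and in $\mathcal{GRP}$ give $f\circ g=\mathrm{id}$ and $g\circ f=\mathrm{id}$, so $f$ is a topological isomorphism.

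You instead construct the $\mathcal{TOPG}$-colimit directly, as the group colimit with the finest group topology making every $\rho_U$ continuous. You check its universal property through the pulled-back topology $h^{-1}(\tau_H)$, and only then invoke uniqueness of colimits.

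Your route buys self-containment and an intrinsic description of $\tau$. It proves that this colimit exists instead of assuming it, and it identifies $\tau$ without reference to $F_x$. Your $\tau$ must coincide with the paper's initial topology, since the identity map is then the comparison isomorphism between two colimits. The paper's route is shorter once cocompleteness of $\mathcal{TOPG}$ is granted.

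Your ``conceptual alternative'' is essentially the paper's argument. The difference is that your observation that $U_1$ is left adjoint to the indiscrete-topology functor replaces the paper's explicit back-and-forth verification that $f$ is bijective.
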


\begin{proof}
    There exists a unique homomorphism $f\colon (U_1F)_x\longrightarrow U_1(F_x)$ such that, for each $U\in \mathcal{V}_x$, the following diagram 
    $$ \xymatrix{(U_1F(U)\ar[rr]^{\rho_{x,\mathcal{GRP}^{U, U_1F}}}  \ar[dd]_{U_1(\rho_{x,\mathcal{TOPG}}^{U, F})}&   & (U_2U_1F)_x\ar[ddll]_f\\ &  &\\
   U_1(F_x) &   & \\
      }$$ commutes. Let $\tau$ be the initial topology induced by $f\colon (U_2U_1F)_x \longrightarrow F_x$, it is known that $((U_2U_1F)_x,\tau)$ is a topological group, also since $f\circ \rho_{x,\mathcal{GRP}^{U, U_1F}}=\rho_{x,\mathcal{TOPG}}^{U, F})$ and $\rho_{x,\mathcal{TOPG}}^{U, F})$ is continuous, we have that $\rho_{x,\mathcal{GRP}^{U, U_1F}}$ is continuous. From the universal property of  colimits, there exists a unique continuous homomorphism $g\colon F_x\longrightarrow ((U_2U_1F)_x,\tau)$ such that, for each $U\in \mathcal{V}_x$, following diagram 
      $$\xymatrix{F(U)\ar[rr]^{\rho_{x, \mathcal{TOPG}^{U,F}}} \ar[dd]_{\rho_{x, \mathcal{GRP}}^{U, U_1F}}&   &  F_x\ar[ddll]_g\\
      &   & \\
      ((U_2U_1F)_x,\tau) 
      }$$  commutes. \\It is not hard to prove that $g\circ f$ is the identity morphism in $Hom_\mathcal{TOPG}( ((U_2U_1F)_x,\tau), ((U_2U_1F)_x,\tau))$ and $g\circ f$ is the identity morphism in $Hom_{\mathcal{TOPG}}(F_x^{\mathcal{GRP}}, F_x^{\mathcal{GRP}})$, what it proves that $f$ is a topological  isomorphism. 
\end{proof}
  The proposition \ref{121024} is valid for $\mathbf{Psh}(X, \mathcal{SET})$, however, the results of this section show that this can also be applied to any full subcategory of full subcategory of  $\mathcal{TOP}$, $\mathcal{GRP}$ or $\mathcal{TOPG}$. with small products and colimits.\\

\section{sheafification of presheaves with values in $\mathcal{SET}$ and some algebraic categories}\label{5}

  The existence of sheafifications  is studied in \cite{kashiwara2006}  for more general categories than those given in this section. We use products and sub-objects for this purpose, as this allows us to see when this process results in a sheaf  with values within the category in question. \\ 
 In this section $\mathcal{A}$ is a concrete category that satisfies the following properties: 
 \begin{itemize}
     \item Every monomorphism in $\mathcal{A}$ that is an epimorphism  is  an isomorphism.
     \item  It is closed under taking small products and small colimits.
 \end{itemize}

Let $F\in \mathbf{Psh}(X, \mathcal{A})$. Given $U$ open in $X$, define $F^{+}(U)$ to be the set of  points $(s_x)_{x\in U}\in \prod_{x\in U}F_x$ such that for every $x\in U$ there exist $ V \in \mathcal{V}_x$, with $V\subseteq U$ and $t\in F(V)$, satisfying $s_y=\rho_{y, \mathcal{C}}^{V,F}(t)$ for any $y\in V$. It is not difficult to prove that if $V\subseteq U$ and $(s_x)_{x\in U}\in F^{+}(U)$, then $(s_x)_{x\in V}\in F^{+}(V)$. Therefore, we can define $(F^{+})_V^{U}\colon F^{+}(U)\longrightarrow  F^{+}(V)$, by $(F^{+})_V^{U}((s_x)_{x\in U})=(s_x)_{x\in V}$, for every $(s_x)_{x\in U}\in F^{+}(U)$. \\ Given $U$ open in $X$ and $F\in \mathbf{Psh}(X, \mathcal{C})$.  There exists a $\mathcal{C}$-morphism   $p_U^{F}\colon F(U)\longrightarrow \prod_{x\in U}F_x$ (or $p_U$ when there is no danger of confusion) such that $\pi_x^U\circ p_U=\rho_{x, \mathcal{C}}^{U,F}$, where$\pi_x^U$ the $x$-th denotes the projection from $\prod_{x\in U} F_x$ to $F_x$. Clearly, $p_U(s)\in F^{+}(U)$ for each  $s\in F(U)$.\\

\begin{lem}\label{193}
    Let  $\mathcal{A}$ be  a subcategory of $\mathcal{SET}$  or  $\mathcal{GRP}$ closed under taking subobjects and  $F\in \mathbf{Psh}(X, \mathcal{A})$. Then $F^{+}(U)\in Ob(\mathcal{A})$ for each open set $U\subseteq X$.
\end{lem}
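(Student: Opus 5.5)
The plan is to exhibit $F^{+}(U)$ as a subobject of the product $\prod_{x\in U}F_x$, which lies in $\mathcal{A}$ by the standing assumption on small products and colimits. Since $\mathcal{A}$ is closed under subobjects, this suffices. In the $\mathcal{SET}$ case there is nothing further to check: $F^{+}(U)$ is a subset of $\prod_{x\in U}F_x$, and the inclusion is a monomorphism. The real content is the $\mathcal{GRP}$ case. There we must show that $F^{+}(U)$ is a subgroup of $\prod_{x\in U}F_x$ with the coordinatewise operation. For each stalk $F_x$ we use the group structure that Theorem \ref{263} transports to $(U_2F)_x$, so that each $\rho_{x,\mathcal{SET}}^{V,U_2F}$ is a homomorphism $F(V)\to F_x$.

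The steps, in order, are as follows.

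\emph{Identity.} The identity $(e_x)_{x\in U}$ lies in $F^{+}(U)$. For each $x\in U$ take $V=U$ and $t=e_U$; then $\rho_{y}^{U}(e_U)=e_y$ for every $y\in U$, because each $\rho$ is a homomorphism (equivalently, by Claim 3 in the proof of Theorem \ref{263}).

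\emph{Products.} Let $(s_x),(r_x)\in F^{+}(U)$ and fix $x\in U$. Choose $V,V'\in\mathcal{V}_x$ contained in $U$, with $t\in F(V)$ and $t'\in F(V')$ witnessing the definition of $F^{+}(U)$ for the two families. Set $W=V\cap V'$ and $u=F_W^V(t)\,F_W^{V'}(t')\in F(W)$. For $y\in W$, the commutativity of diagram (\ref{1931}) together with the homomorphism property of $\rho_y^W$ gives
\[
\rho_y^W(u)=\rho_y^W(F_W^V(t))\,\rho_y^W(F_W^{V'}(t'))=\rho_y^V(t)\,\rho_y^{V'}(t')=s_y r_y .
\]
So $W$ and $u$ witness that $(s_yr_y)_{y\in U}\in F^{+}(U)$.

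\emph{Inverses.} With the same notation, $t^{-1}\in F(V)$ satisfies $\rho_y^V(t^{-1})=s_y^{-1}$ for $y\in V$, by Claim 5 in the proof of Theorem \ref{263}. Hence $(s_y^{-1})_{y\in U}\in F^{+}(U)$.

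These three steps show that $F^{+}(U)$ is a subgroup of the product. The inclusion $F^{+}(U)\hookrightarrow\prod_{x\in U}F_x$ is then an injective homomorphism, hence a monomorphism, and closure of $\mathcal{A}$ under subobjects gives $F^{+}(U)\in Ob(\mathcal{A})$.

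The only step with any substance is the product step, specifically passing to the common refinement $W=V\cap V'$ and using diagram (\ref{1931}) to identify $\rho_y^W\circ F_W^V$ with $\rho_y^V$. The subtle point I expect is the interpretation of ``closed under taking subobjects'' when $\mathcal{A}$ is a proper subcategory of $\mathcal{GRP}$, for instance abelian groups. In that case the stalks and the product must be computed in $\mathcal{A}$, and I will rely on the standing hypotheses, together with the remark after Theorem \ref{263} extending it to full subcategories, to ensure that the stalks computed in $\mathcal{A}$ agree set-theoretically with the fibers described in Proposition \ref{121024}, so that the $\rho$'s are still homomorphisms.
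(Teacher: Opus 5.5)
Your proof is correct and is essentially the paper's argument. The $\mathcal{SET}$ case is immediate, and in the $\mathcal{GRP}$ case you show $F^{+}(U)$ is a subgroup of $\prod_{x\in U}F_x$ by passing to the common refinement $V\cap V'$ for products and using $t^{-1}$ for inverses, exactly as the paper does; you additionally check the identity, which the paper leaves implicit. The detour through Theorem~\ref{263} and your worry about stalks in a proper subcategory are unnecessary here, since the cocone maps $\rho_{y,\mathcal{A}}^{V,F}$ are $\mathcal{A}$-morphisms, hence group homomorphisms compatible with restriction, and that is all the computation uses.
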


\begin{proof}
    If $\mathcal{A}$ is a subcategory of $\mathcal{SET}$ , $F^{+}(U)$ is a subset of $\prod_{x\in U}F_x$. Let  $\mathcal{A}$ be  a subcategory of $\mathcal{GRP}$, let us see that $F^{+}(U)$ is a subgroup of $\prod_{x\in U}F_x$. Indeed, let $(s_x)_{x\in U}, (r_x)_{x\in U}\in F^{+}(U)$. Given $x\in U$ there exist $V,W\in \mathcal{V}_x$, $V,W\subseteq U$ and $t\in F(V)$, $s\in F( W)$ such that $\rho_{y,\mathcal{A}}^{V,F}(t)=s_y$ for every $y\in V$ and $\rho_{y,\mathcal{A}}^{W,F}(s)=r_y$ for every $y\in W$. Let $R=W\cap V$.  Let us put  $m = F^{R}_{V}(t)  F^{R}_{w}(s).$ for every  $y \in R$,
\begin{align*}
\rho^{R, F}_{y,\mathcal{A}}(m)
&= \rho^{R, F}_{y,\mathcal{A}}(F^{V}_{R}(t)  F^{W}_{R}(s)) \\
&=\rho^{R, F}_{y,\mathcal{A}}(F^{V}_{R}(r))\rho^{R,F}_{y,\mathcal{A}}(F^{W}_{R}(t_2))\\
&=\rho^{V,F}_{y, \mathcal{A}}(t)\rho^{W,F}_{y,\mathcal{A}}(s)\\
&=s_y r_y=(sr)_y.
\end{align*}

Now, $t^{-1}\in F(V)$ and for every $y\in V$, $\rho_{y,\mathcal{A}}^{V,F}(t^{-1})=s_y^{-1}=(s^{-1})_y$. This proves $((s_x)_{x\in U})^{-1}\in F^{+}(U)$.

\end{proof}

From lemma above, we have that if $F\in \mathbf{Psh}(X, \mathcal{GRP})$, then  $F^{+}\in \mathbf{Sh}(X,\mathcal{GRP})$, but this result remains valid in some other   universal algebras such as rings, semigroups, vectorial spaces, modules, etc.

\begin{prop}
     Let $\mathcal{A}$  be  a subcategory of  $\mathcal{SET}$  or  $ \mathcal{GRP}$ closed under taking subobjects  and  $F\in \mathbf{Psh}(X, \mathcal{A})$. Then $F^{+}$  is a sheaf with values in   $\mathcal{A}.$ 
\end{prop}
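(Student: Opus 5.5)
By Lemma \ref{193}, each $F^{+}(U)$ is already an object of $\mathcal{A}$. So the plan is to check the remaining ingredients in order: first that $F^{+}$ is a presheaf in $\mathcal{A}$, and then the two sheaf axioms.

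\textbf{Presheaf structure.} For $V\subseteq U$, the restriction $(F^{+})_V^U$ is the coordinate projection $(s_x)_{x\in U}\mapsto (s_x)_{x\in V}$. Projections of a product are morphisms (homomorphisms in the group case), and $F^{+}(U)$ carries the substructure of $\prod_{x\in U}F_x$. Hence each restriction is an $\mathcal{A}$-morphism. Functoriality, $(F^{+})_W^V(F^{+})_V^U=(F^{+})_W^U$ and $(F^{+})_U^U=\mathrm{id}$, holds because restricting an indexed family twice is the same as restricting once.

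\textbf{Axiom 1 (locality).} Let $\{U_i\}_{i\in I}$ cover $U$, and let $s=(s_x)_{x\in U}$, $r=(r_x)_{x\in U}\in F^{+}(U)$ agree after restriction to every $U_i$. Every $x\in U$ lies in some $U_i$. From $(s_y)_{y\in U_i}=(r_y)_{y\in U_i}$ we get $s_x=r_x$. So $s=r$ as elements of the product. This step is purely set-theoretic, because elements of $F^{+}(U)$ are determined by their coordinates.

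\textbf{Axiom 2 (gluing).} Let $s^i=(s^i_x)_{x\in U_i}\in F^{+}(U_i)$ satisfy the compatibility condition on every $U_i\cap U_j$.
\begin{itemize}
\item[(i)] Define $s_x:=s^i_x$ for any $i$ with $x\in U_i$. This is independent of $i$: if $x\in U_i\cap U_j$, compatibility means the families $s^i$ and $s^j$ coincide on $U_i\cap U_j$, so $s^i_x=s^j_x$.
\item[(ii)] Show that $s=(s_x)_{x\in U}$ lies in $F^{+}(U)$. Fix $x\in U$ and choose $i$ with $x\in U_i$. Since $s^i\in F^{+}(U_i)$, there exist $V\in\mathcal{V}_x$ with $V\subseteq U_i\subseteq U$ and $t\in F(V)$ such that $s^i_y=\rho_{y}^{V,F}(t)$ for all $y\in V$. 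Because $s_y=s^i_y$ on $V$, the same pair $(V,t)$ witnesses the defining condition of $F^{+}(U)$ at $x$.
\item[(iii)] By construction, $(F^{+})_{U_i}^U(s)=s^i$ for every $i$.
\end{itemize}

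The main subtle point is not in the gluing itself, since the definition of $F^{+}$ is local, but in making sure everything is compatible with the structure in $\mathcal{A}$. The glued element $s$ lives in the underlying set of $F^{+}(U)$, and the sheaf axioms as defined in Section \ref{2} are stated on underlying sets of a concrete category. So it suffices that $F^{+}(U)$ is an $\mathcal{A}$-object, which is Lemma \ref{193}, and that the restrictions are $\mathcal{A}$-morphisms, which was shown above.

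In the group case, one should also confirm that the elements $\rho_y^{V,F}(t)$ are computed in the group fiber $F_y$. Theorem \ref{263} identifies this fiber with the set-theoretic fiber, so Proposition \ref{121024} applies, and the argument above goes through unchanged.
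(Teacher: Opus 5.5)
Your proof is correct and follows essentially the same route as the paper: you invoke Lemma \ref{193} for objecthood, prove locality coordinatewise, and glue via the well-defined family $s_x=s^i_x$. Your step (ii), which checks that the glued family actually lies in $F^{+}(U)$ by reusing the local witness $(V,t)$ of $s^i$, makes explicit a point the paper's proof passes over silently; your closing appeal to Theorem \ref{263} and Proposition \ref{121024} is harmless but unnecessary, since the argument uses only the definition of $F^{+}$.
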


\begin{proof}By Lemma \ref{193} it is enough to prove that axioms 1 and 2 hold.

\begin{itemize}
    \item[1.]  Let $U$ be open in $X$ and $U=\bigcup_{i\in I}U_i$ an open cover of $U$. Let us suppose that $$(F^{+})_{U_i}^{U}((s_x)_{x\in U})=(F^{+})_{U_i}^{U}((r_x)_{x\in U}),$$ where $(s_x)_{x\in U}, (r_x)_{x\in U}\in F^{+}(U)$. Let $p\in U$, then there exists $i\in I$ such that $p\in U_i$. By our assumption, $(F^{+})_{U_i}^{U}((s_x)_{x\in U})=(F^{+})_{U_i}^{U}((r_x)_{x\in U})$, that is $(s_x)_{x\in U_i}=(r_x)_{x\in U_i}$, therefore, $s_p=r_p$ for each $p\in U$, then $(s_x)_{x\in U}=(r_x)_{x\in U}$.

    \item[2.]  Suppose that $U$ is open in $X$ and $U=\bigcup_{i\in I}U_i$ is an open cover of $U$. Suppose that we are given $(s_i)_{i\in I}\in \prod_{i\in I}F^{+}(U_i)$ such that $(F^{+})_{U_i\cap U_j}^{U_i}(s_i)=(F^{+})_{U_i\cap U_j}^{U_j}(s_j)$, for each $i,j\in I$. If $x\in U$, there exists $i\in I$ such that $x\in U_{i}$. Let us put $s_x=(s_{i})_x$ (the $x$-th coordinate of $s_{i}$). $s_x$ is uniquely determined by $x$, since if $x\in U_i\cap U_j$, then by our assumption $((s_i)_x)_{x\in U_i\cap U_j}=((s_j)_x)_{x\in U_i\cap U_j}$, so that $(s_i)_x=(s_j)_x$.  Let $s=((s_{i})_x)_{x\in U}$, then $(F^{+})_{U_i}^{U}(s)=((s_i)_x)_{x\in U_i}=s_i$.
\end{itemize}  
\end{proof}

In \cite[Corollary 4]{maclane2012sheaves} it was proven that $\mathbf{Sh}(X, \mathcal{A})$ is a reflective subcategory  of $\mathbf{Psh}(X, \mathcal{A})$, when $\mathcal{A}=\mathcal{SET}$. In \cite{gray1965sheaves} a similar result was found, when $X$ is a discrete space and $\mathcal{A}$ is a category  with products. We will provide a realization of  sheafification of a presheaf using products and subobjects.

\begin{lem}\label{223} Let $\mathcal{A}$  be a subcategory of $\mathcal{SET}$ or  $\mathcal{GRP}$ closed under taking subobjects. Then $\{p_U\colon F(U)\longrightarrow F^{+}(U): U \mbox{is open in X}\}$ is a natural transformation for each $F\in \mathbf{Psf}(X, \mathcal{A})$. Moreover, if $F\in \mathbf{Sh}(X, \mathcal{A})$, then  $\{p_U\colon F(U)\longrightarrow F^{+}(U): U \mbox{is open in X}\}$ is a natural isomorphism. 
    
\end{lem}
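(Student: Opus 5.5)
First I check that each $p_U$ lands in $F^{+}(U)$ and is an $\mathcal{A}$-morphism. For $s\in F(U)$ and $x\in U$, take $V=U$ and $t=s$ in the definition of $F^{+}(U)$. Then $p_U(s)=(\rho_{y,\mathcal{A}}^{U,F}(s))_{y\in U}$ lies in $F^{+}(U)$. In the $\mathcal{GRP}$ case, $p_U$ is a homomorphism into $\prod_{x\in U}F_x$, because each coordinate $\rho_{x,\mathcal{A}}^{U,F}$ is one. By Lemma \ref{193}, $F^{+}(U)$ is a subgroup, so $p_U$ corestricts to a morphism $F(U)\to F^{+}(U)$. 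The remark after Theorem \ref{263} lets me compute fibers set-theoretically, so Proposition \ref{121024} applies.

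Next comes naturality. For $V\subseteq U$ open and $s\in F(U)$, I need $(F^{+})_V^U(p_U(s))=p_V(F_V^U(s))$. The left side is $(\rho_{y}^{U,F}(s))_{y\in V}$ and the right side is $(\rho_y^{V,F}(F_V^U(s)))_{y\in V}$. These agree coordinatewise by the commutativity of diagram (\ref{1931}), i.e. $\rho_y^{V,F}\circ F_V^U=\rho_y^{U,F}$ for $y\in V$.

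Now let $F$ be a sheaf. I show each $p_U$ is bijective; in $\mathcal{GRP}$ a bijective homomorphism is an isomorphism, and the inverses are automatically natural.

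\emph{Injectivity.} If $p_U(s_1)=p_U(s_2)$, then $\rho_x^{U,F}(s_1)=\rho_x^{U,F}(s_2)$ for all $x\in U$. Proposition \ref{121024}(c) then gives $s_1=s_2$.

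\emph{Surjectivity.} This is the main step. Take $(s_x)_{x\in U}\in F^{+}(U)$. For each $x\in U$, choose $V_x\in\mathcal{V}_x$ with $V_x\subseteq U$ and $t_x\in F(V_x)$ such that $s_y=\rho_y^{V_x,F}(t_x)$ for all $y\in V_x$. The sets $V_x$ cover $U$.

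To glue, I check compatibility on $V_x\cap V_z$. For $y\in V_x\cap V_z$ we have
$$\rho_y^{V_x\cap V_z,F}(F^{V_x}_{V_x\cap V_z}(t_x))=\rho_y^{V_x,F}(t_x)=s_y=\rho_y^{V_z,F}(t_z)=\rho_y^{V_x\cap V_z,F}(F^{V_z}_{V_x\cap V_z}(t_z)).$$
So the two restrictions have the same germ at every point of $V_x\cap V_z$. Proposition \ref{121024}(c), applied to $V_x\cap V_z$, gives $F^{V_x}_{V_x\cap V_z}(t_x)=F^{V_z}_{V_x\cap V_z}(t_z)$.

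Axiom 2 then yields $t\in F(U)$ with $F^U_{V_x}(t)=t_x$ for every $x$. Finally, for $y\in U$, pick $x$ with $y\in V_x$. Then
$$\rho_y^{U,F}(t)=\rho_y^{V_x,F}(F^U_{V_x}(t))=\rho_y^{V_x,F}(t_x)=s_y,$$
so $p_U(t)=(s_x)_{x\in U}$.

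The main obstacle is this compatibility check. It is where germ equality must be promoted to equality of sections, which uses the sheaf property through Proposition \ref{121024}(c).
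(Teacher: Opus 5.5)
Your proof is correct and follows essentially the same route as the paper. Naturality comes from the commutativity of (\ref{1931}), and injectivity from Proposition~\ref{121024}(c); surjectivity comes from gluing the local representatives $t_x$ via axiom~2, after upgrading germ equality on $V_x\cap V_z$ to equality of sections through Proposition~\ref{121024}(c). You are also slightly more careful than the paper: you check explicitly that $p_U$ lands in $F^{+}(U)$ as an $\mathcal{A}$-morphism, and you write the overlap compatibility correctly in terms of the $t_x$, where the paper mistakenly writes $s_x$.
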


\begin{proof} By equation \ref{1931},  it is easy to prove  that if $U$ and  $V$ are open in $X$, $V\subseteq U$, then
the following diagram

$$\xymatrix{ F(U)\ar[rr]^{p_U}\ar[dd]_{F_V^{U}} &   &  F^{+}(U)\ar[dd]^{(F^{+})_V^{U}}\\
&   &\\
F(V)\ar[rr]_{p_V} &   &  F^{+}(V)}$$ commutes. Let $F\in \mathbf{Sh}(X, \mathcal{C})$ and let us see that each $p_U$ is an isomorphism. The injectivity of $p_U$ follows from $c)$ Proposition \ref{121024}, although it is given for $\mathcal{SET}$, Section \ref{1932} tells us that it is also true for  $\mathcal{GRP}$. Let us see that $p_U$ is surjective. Indeed, let $(s_x)_{x\in U}\in F^{+}(U)$. For each $x\in U$ there exist $\mathcal{V}_x\ni V_x\subseteq U$ and $t_x\in F(V_x)$ such that $\rho_{y, \mathcal{A}}^{V_x,F}(t_x)=s_y$, for each $y\in V_x$. Now $U=\bigcup_{x\in U}V_x$ is an open cover of $U$, if $x,r \in U$, for every $y\in V_x \cap V_r$, we have that $\rho_{y, \mathcal{A}}^{V_x\cap V_r,F}(F_{V_x\cap V_r}^{V_x}(t_x))=\rho_{y, \mathcal{A}}^{V_x, F}(t_x)=s_y=\rho_{y, \mathcal{A}}^{V_r,F}(t_r)=\rho_{y, \mathcal{A}}^{V_r\cap V_x,F}(F_{V_x\cap V_r}^{V_r}(t_r))$, if we apply Proposition \ref{121024} $c$ again, we have that $F_{V_x\cap V_r}^{V_x}(s_x)=F_{V_x\cap V_r}^{V_r}(s_r)$, therefore, by axiom 2, there exists $s\in F(U)$ such that $F_{V_x}^{U}(s)=t_x$ for each $x\in U$. So that $p_U(s)=(\rho_{x, \mathcal{A}}^{U, F}(s))_{x\in U}=(\rho_{x\mathcal{A}}^{V_x,F}(F_{V_x}^{U}(s))_{x\in U}=(\rho_{x\mathcal{A}}^{V_x,F}(t_x))_{x\in U}=(s_x)_{x\in U}$.  Therefore, $p_U$ is bijective, which implies that $p_U$ is an isomorphism.

\end{proof}

\begin{prop}\label{2331}  Let $\mathcal{A}$ be a subcategory of  $\mathcal{SET}$ or $\mathcal{GRP}$ closed under taking subobjects, $F,G\in \mathbf{Psh}(X, \mathcal{A})$ and $\theta\colon F\longrightarrow G$ a natural transformation. Then there exists a unique natural transformation  $\theta^+\colon F^{+}\longrightarrow G^+$ such that  the following diagram 

\begin{equation}\label{233}\xymatrix{F(U)\ar[rr]^{\theta_U} \ar[dd]^{p_U^{F}} &   &   G(U)\ar[dd]^{p_U^{G}}\\
            &    &\\
            F^{+}(U)\ar[rr]^{\theta^{+}_U}  &    &   G^{+}(U)}  \end{equation}
commutes for each $U$ open in $X$.
\end{prop}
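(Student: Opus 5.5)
The plan is to construct $\theta^+$ from the maps induced by $\theta$ on fibers, and to use the local description of $F^+(U)$ for uniqueness. First fix $x\in X$. The family $\{\rho_{x,\mathcal{A}}^{V,G}\circ\theta_V\}_{V\in\mathcal{V}_x}$ is a cocone on the diagram $V\mapsto F(V)$. This follows from naturality of $\theta$ and the commutativity of (\ref{1931}) for $G$: for $U\subseteq V$ in $\mathcal{V}_x$ we have $\rho_{x}^{U,G}\theta_U F_U^V=\rho_x^{U,G}G_U^V\theta_V=\rho_x^{V,G}\theta_V$. The universal property of the colimit then gives a unique $\mathcal{A}$-morphism $\theta_x\colon F_x\to G_x$ with $\theta_x\circ\rho_{x,\mathcal{A}}^{V,F}=\rho_{x,\mathcal{A}}^{V,G}\circ\theta_V$ for all $V\in\mathcal{V}_x$. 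By Section \ref{1932} this can be checked at the level of underlying sets, with the group structure recovered as in Theorem \ref{263}.

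Next, for $U$ open, define $\theta^+_U((s_x)_{x\in U})=(\theta_x(s_x))_{x\in U}$. This is the restriction to $F^+(U)$ of the product morphism $\prod_{x\in U}\theta_x\colon\prod_{x\in U}F_x\to\prod_{x\in U}G_x$, so it is a morphism in $\mathcal{A}$: a function in the $\mathcal{SET}$ case, and a homomorphism in the $\mathcal{GRP}$ case, since $F^+(U)$ and $G^+(U)$ are subobjects by Lemma \ref{193}. We must check that it lands in $G^+(U)$. Given $x\in U$, choose $V\in\mathcal{V}_x$ with $V\subseteq U$ and $t\in F(V)$ such that $s_y=\rho_y^{V,F}(t)$ for all $y\in V$. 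Then $\theta_y(s_y)=\rho_y^{V,G}(\theta_V(t))$ for all $y\in V$, so the element $\theta_V(t)\in G(V)$ witnesses the local condition. Naturality of $\theta^+$ is immediate, because the restrictions of $F^+$ and $G^+$ are just coordinate restrictions. Commutativity of (\ref{233}) holds coordinatewise: $\pi_x p_U^G\theta_U=\rho_x^{U,G}\theta_U=\theta_x\rho_x^{U,F}=\pi_x\theta^+_U p_U^F$.

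Uniqueness is the main obstacle. The image of $p_U^F$ need not be all of $F^+(U)$, so commutativity of (\ref{233}) at a single $U$ does not by itself determine $\theta^+_U$. I would use naturality together with the requirement that the square commute at every open set. Let $\eta\colon F^+\to G^+$ be another natural transformation making all the squares commute, and let $s\in F^+(U)$ and $x\in U$. Choose $V\subseteq U$ and $t\in F(V)$ as above, so that $(F^+)^U_V(s)=p_V^F(t)$. Then
\[(G^+)^U_V(\eta_U(s))=\eta_V(p_V^F(t))=p_V^G(\theta_V(t))=(G^+)^U_V(\theta^+_U(s)).\]
Since restriction in $G^+$ is coordinate restriction, taking the $x$-th coordinate shows that $\eta_U(s)$ and $\theta^+_U(s)$ agree at $x$. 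As $x$ was arbitrary, $\eta_U=\theta^+_U$. Alternatively one can invoke axiom 1 for the sheaf $G^+$ (established in the preceding Proposition) on the cover $\{V_x\}_{x\in U}$.
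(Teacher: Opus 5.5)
Your proof is correct and follows the paper's construction: the paper also obtains the fiber maps $m_x\colon F_x\to G_x$ from the cocone $\rho_{x,\mathcal{A}}^{V,G}\circ\theta_V$, defines $\theta^+_U$ coordinatewise, and uses $\theta_V(t)$ to show the image lies in $G^+(U)$. The paper never proves uniqueness, so your remark that $p_U^F$ need not be onto $F^+(U)$, together with your argument using naturality and the local representatives $(F^+)^U_V(s)=p_V^F(t)$, supplies a step the paper omits rather than taking a different route.
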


\begin{proof}
    If $U$ and $V$ are open in $X$ and $x\in V\subseteq U$, then the following diagram 

    $$ \xymatrix{  F(U)\ar[rr]^{\rho_{x,\mathcal{A}}^{U,G}\circ \theta_U} \ar[dd]^{F_V^{U}}&   &  G_x\\
    &    & \\
    F(V)\ar[uurr]_{\rho_{x,\mathcal{A}}^{V,G} \circ\theta_V } }$$
    commutes. Therefore, for each $x\in U$ there exists a unique $\mathcal{C}-$morphism $m_x\colon F_x\longrightarrow G_x$ such that the following diagram 

    $$\xymatrix{  F_x \ar[rr]^{m_x}&   &    G_x\\&  &\\
    F(U)\ar[uu]^{\rho_{x,\mathcal{A}}^{U,F}}\ar[uurr]_{\rho_{x,\mathcal{A}}^{U,G}\circ \theta_U}&   &}  $$
commutes for each $U\in \mathcal{V}_x$. So that, we can define $\theta^{+}_U\colon \prod_{x\in U}F_x\longrightarrow \prod_{x\in U}G_x$ such that $\theta^{+}_U((s_x)_{x\in U})=(m_x(s_x))_{x\in U}$. If $(s_x)_{x\in U}\in F^{+}(U)$ if $x\in U$, there exist $\mathcal{V}\ni V\subseteq U$ and $t\in F(V)$ such that $s_y=\rho_{y, \mathcal{A}}^{V,F}(t)$. $\theta_V(t)\in G(V)$ and $m_y(s_y)=m_y(\rho_{y,\mathcal{A}}^{V,F}(t_x))=\rho_{x,\mathcal{A}}^{V,G}(\theta_V(t^{+}))$. This shows that $\theta_U^{+}$ maps $F^{+}(U)$ in $G^{+}(U)$. It is straightforward to prove that the diagram \ref{233} commutes.

\end{proof}

The following theorem tells us that if $F\in \mathbf{Psh}(X, \mathcal{A})$, then $F^{+}$ is the reflection (the sheafification) of $F$ in $\mathbf{Sh}(X, \mathcal{A})$ 

\begin{teo}\label{2531}
    Let $F\in \mathbf{Psh}(X, \mathcal{A})$, where   $\mathcal{A}$ is a subcategory of $\mathcal{SET}$ or $\mathcal{GRP}$. closed under taking subobjects.  If $G\in \mathbf{Sh}(X, \mathcal{A})$ and $\theta\colon F\longrightarrow G$ is a natural transformation, then there exists a unique  natural transformation $\sigma_\theta\colon F^{+}\longrightarrow G$ such that $\sigma_\theta\circ p^{F}=\theta$.
    
\end{teo}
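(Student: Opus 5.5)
The plan is to obtain $\sigma_\theta$ by composing two maps already available in the paper. By Proposition \ref{2331}, $\theta$ induces a natural transformation $\theta^{+}\colon F^{+}\longrightarrow G^{+}$ with $\theta^{+}_U\circ p^F_U=p^G_U\circ\theta_U$ for every open $U$. Since $G$ is a sheaf, Lemma \ref{223} says that $p^G\colon G\longrightarrow G^{+}$ is a natural isomorphism. We then define
$$\sigma_\theta:=(p^G)^{-1}\circ\theta^{+}\colon F^{+}\longrightarrow G.$$
This is natural because it is a composite of natural transformations; the inverse of a natural isomorphism is again natural. Each component is an $\mathcal{A}$-morphism: in $\mathcal{GRP}$ the inverse of a bijective homomorphism is a homomorphism, and in $\mathcal{SET}$ there is nothing to check. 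For existence we then compute $\sigma_\theta\circ p^F=(p^G)^{-1}\circ\theta^{+}\circ p^F=(p^G)^{-1}\circ p^G\circ\theta=\theta$, using diagram (\ref{233}).

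For uniqueness, suppose $\sigma,\sigma'\colon F^{+}\longrightarrow G$ are natural transformations with $\sigma\circ p^F=\sigma'\circ p^F=\theta$. Fix an open $U$ and an element $s=(s_x)_{x\in U}\in F^{+}(U)$. By the definition of $F^{+}$, there are an open cover $U=\bigcup_{x\in U}V_x$ with $x\in V_x\subseteq U$ and elements $t_x\in F(V_x)$ such that $(F^{+})^{U}_{V_x}(s)=p^F_{V_x}(t_x)$. Naturality of $\sigma$ then gives
$$G^{U}_{V_x}(\sigma_U(s))=\sigma_{V_x}\big((F^{+})^{U}_{V_x}(s)\big)=\sigma_{V_x}(p^F_{V_x}(t_x))=\theta_{V_x}(t_x),$$
and the same identity holds for $\sigma'$. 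So $\sigma_U(s)$ and $\sigma'_U(s)$ have the same restriction to every $V_x$. Axiom 1 for the sheaf $G$ now gives $\sigma_U(s)=\sigma'_U(s)$, hence $\sigma=\sigma'$.

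The only delicate point I expect is the identity $(F^{+})^{U}_{V_x}(s)=p^F_{V_x}(t_x)$. It must hold as an equality in $F^{+}(V_x)$, i.e.\ coordinatewise for every $y\in V_x$. This is exactly the defining condition $s_y=\rho^{V_x,F}_{y,\mathcal{A}}(t_x)$ together with $\pi_y\circ p_{V_x}=\rho^{V_x,F}_{y,\mathcal{A}}$, so it reduces to unwinding definitions. The argument is purely elementwise, which is legitimate because $\mathcal{A}$ is concrete over $\mathcal{SET}$ or $\mathcal{GRP}$.
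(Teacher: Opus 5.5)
Your proof is correct. The existence half is exactly the paper's argument: $\sigma_\theta=(p^G)^{-1}\circ\theta^{+}$, with $\theta^{+}$ taken from Proposition~\ref{2331} and the invertibility of $p^G$ from Lemma~\ref{223}, followed by the same one-line computation.

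Where you go beyond the paper is uniqueness. The paper's proof stops once $\sigma_\theta$ is exhibited and never argues that it is unique. The most one could extract is an implicit appeal to the uniqueness clause of Proposition~\ref{2331}: if $\sigma\circ p^F=\theta$, then $p^G\circ\sigma$ also makes (\ref{233}) commute, so $p^G\circ\sigma=\theta^{+}$. That clause, however, is itself left unproved in the paper. Proving it needs the same local reasoning you use, with axiom~1 for the sheaf $G^{+}$ in place of axiom~1 for $G$.

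Your direct argument settles the point cleanly. Every $s\in F^{+}(U)$ restricts on some cover $\{V_x\}$ to elements of the form $p^F_{V_x}(t_x)$. Naturality then forces the restrictions of $\sigma_U(s)$ to be $\theta_{V_x}(t_x)$. Axiom~1 for $G$ determines $\sigma_U(s)$ from these restrictions. So your proof supplies the uniqueness the paper's own proof leaves unaddressed.
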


\begin{proof}
 By Proposition \ref{2331} there exists  a unique natural transformation 
 $\theta^{+}\colon F^{+} \longrightarrow G^+$ such that $p^{G}\circ \theta=\theta^{+}\circ p^{F}$. By Lemma \ref{223}, $p^{G}$ is a natural isomorphism, therefore, $\theta=(p^{G})^{-1}\circ \theta^{+}\circ p^{F}$, if we put $\sigma_\theta=: (p^{G})^{-1}\circ \theta^{+}$, then  we have the result.
\end{proof}


 In what follows, $\mathcal{A}$ is a subcategory of $\mathcal{SET}$  or  $\mathcal{GRP}$, such that if $F\in \mathbf{Psh}(X, \mathcal{A})$, then $F^{+}\in \mathbf{Sh}(X, \mathcal{A})$ (a necessary condition for this is that $\mathcal{A}$ be closed under taking products and subobjects). $\mathcal{C}$ will denote a reflective subcategory of $\mathcal{A}$.\\

 Let $\mathcal{C}$ be a reflective subcategory of $\mathcal{A}$.
 
\begin{teo}\label{303} Suppose that $\mathcal{C}$ is closed under taking  products and subobjects. Then $\mathbf{Sh}(X,\mathcal{C})$ is a reflective subcategory of $\mathbf{Psh}(X, \mathcal{A})$.
    
\end{teo}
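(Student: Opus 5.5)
The plan is to compose two reflections: first reflect presheaves with values in $\mathcal{A}$ to presheaves with values in $\mathcal{C}$ by Theorem \ref{253}, then sheafify inside $\mathbf{Psh}(X,\mathcal{C})$. Given $F\in \mathbf{Psh}(X,\mathcal{A})$, the candidate reflection is
$$F\xrightarrow{\ \theta^{F,\mathcal{C}}\ } r_\mathcal{C}F\xrightarrow{\ p^{r_\mathcal{C}F}\ } (r_\mathcal{C}F)^{+}.$$
Since reflective subcategories compose (a composite of left adjoints is left adjoint to the composite inclusion), it suffices to show two things. First, $(r_\mathcal{C}F)^{+}\in \mathbf{Sh}(X,\mathcal{C})$. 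Second, $p^{r_\mathcal{C}F}$ is a $\mathbf{Sh}(X,\mathcal{C})$-reflection of $r_\mathcal{C}F$ inside $\mathbf{Psh}(X,\mathcal{C})$.

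\textbf{Step 1: $(r_\mathcal{C}F)^{+}$ is a sheaf with values in $\mathcal{C}$.} Here $\mathcal{C}$ is a subcategory of $\mathcal{SET}$ or $\mathcal{GRP}$ closed under products and subobjects, so we are in the setting of Lemma \ref{193} and the subsequent Proposition, with $\mathcal{C}$ in place of $\mathcal{A}$. The one point needing care is the stalks $(r_\mathcal{C}F)_x$. They are colimits, and we need them to lie in $\mathcal{C}$ and to agree with the stalks computed in $\mathcal{A}$, or at least with the set/group-level colimit used in Section \ref{1932}. I would argue as follows. The stalk is the filtered colimit computed in $\mathcal{SET}$ or $\mathcal{GRP}$ (Theorem \ref{263}), and $\mathcal{C}$, being reflective, is closed under the limits of $\mathcal{A}$. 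For the colimit, take the colimit in $\mathcal{A}$ and apply $r_\mathcal{C}$; reflecting a colimit of objects of $\mathcal{C}$ gives their colimit in $\mathcal{C}$. Then $\prod_{x\in U}(r_\mathcal{C}F)_x\in\mathcal{C}$, and $(r_\mathcal{C}F)^{+}(U)$ is a subobject of it, hence lies in $\mathcal{C}$. The sheaf axioms 1 and 2 are then verified exactly as in the Proposition following Lemma \ref{193}, since that argument is purely coordinatewise.

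\textbf{Step 2: universal property.} Let $G\in\mathbf{Sh}(X,\mathcal{C})$ and let $\theta\colon F\to G$ be a natural transformation. By Theorem \ref{253} there is a unique $\mathcal{C}(\theta)\colon r_\mathcal{C}F\to G$ with $\mathcal{C}(\theta)\circ\theta^{F,\mathcal{C}}=\theta$. Applying Theorem \ref{2531} with $\mathcal{C}$ in place of $\mathcal{A}$, which uses Lemma \ref{223} to get $p^{G}$ invertible since $G$ is a sheaf, gives a unique $\sigma\colon (r_\mathcal{C}F)^{+}\to G$ with $\sigma\circ p^{r_\mathcal{C}F}=\mathcal{C}(\theta)$. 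Hence $\sigma\circ p^{r_\mathcal{C}F}\circ\theta^{F,\mathcal{C}}=\theta$.

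For uniqueness, suppose $\sigma'$ also satisfies $\sigma'\circ p^{r_\mathcal{C}F}\circ\theta^{F,\mathcal{C}}=\theta$. Then $\sigma'\circ p^{r_\mathcal{C}F}$ is a morphism $r_\mathcal{C}F\to G$ into a $\mathcal{C}$-valued presheaf that factors $\theta$. By the uniqueness in Theorem \ref{253} it equals $\mathcal{C}(\theta)$, and the uniqueness in Theorem \ref{2531} then forces $\sigma'=\sigma$.

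\textbf{Main obstacle.} I expect the main obstacle to be Step 1, namely ensuring the stalks and $F^{+}$ are computed compatibly in $\mathcal{C}$. That is, the colimit in $\mathcal{C}$ should coincide with the concrete one, so that Lemma \ref{193} applies. If this identification fails, my fallback is to use the standing hypothesis that $F^{+}\in\mathbf{Sh}(X,\mathcal{A})$ for $\mathcal{A}$-valued presheaves. Since $\mathcal{C}$ is closed under products and subobjects, I would then check directly that $(r_\mathcal{C}F)^{+}(U)\subseteq\prod_{x}(r_\mathcal{C}F)_x$ lands in $\mathcal{C}$, using that the concrete filtered colimit of $\mathcal{C}$-objects is a quotient. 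That concrete colimit embeds into the product of its reflections, via Theorem \ref{263}'s description of stalks.
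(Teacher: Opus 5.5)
Your architecture is exactly the paper's. The reflection is $p^{r_\mathcal{C}F}\circ\theta^{F,\mathcal{C}}$, and $\theta$ factors through $\mathcal{C}(\theta)$ (Theorem \ref{253}) and then $\sigma_{\mathcal{C}(\theta)}$ (Theorem \ref{2531} applied to $\mathcal{C}$). Your explicit uniqueness argument is a useful addition that the paper leaves implicit.

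The gap is Step 1. The paper only asserts it (``given the assumptions on $\mathcal{C}$''), and your argument does not establish it, because it switches between two meanings of the stalk.
\begin{itemize}
\item If $(r_\mathcal{C}F)_x$ is the colimit in $\mathcal{C}$, i.e.\ $r_\mathcal{C}$ applied to the concrete stalk, then $(r_\mathcal{C}F)^{+}$ is indeed a $\mathcal{C}$-valued sheaf. But its germ maps are then $r_\mathcal{C}\circ\rho$, which can identify germs of sections that agree on no smaller neighbourhood. Proposition \ref{121024} b), c), on which Lemma \ref{223} and hence Theorem \ref{2531} rest, is therefore not available, so Step 2 is unjustified.
\item If instead you keep the concrete (set/group-level) stalks so that Step 2 is valid, the factors of $\prod_{x\in U}(r_\mathcal{C}F)_x$ need not lie in $\mathcal{C}$. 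Closure under products and subobjects then gives you nothing.
\end{itemize}

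Your fallback claim, that a concrete directed colimit of $\mathcal{C}$-objects embeds into a product of $\mathcal{C}$-objects, is false. Take $\mathcal{A}=\mathcal{AB}$ and $\mathcal{C}$ the reduced abelian groups. This $\mathcal{C}$ is reflective via $A\mapsto A/D$ with $D$ the largest divisible subgroup, and it is closed under products and subgroups. Yet $\mathbb{Q}=\mathrm{colim}(\mathbb{Z}\xrightarrow{2}\mathbb{Z}\xrightarrow{3}\cdots)$ has no nonzero map to a reduced group.

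This breaks the construction itself. On the one-point compactification $X=\mathbb{N}\cup\{\infty\}$, define:
\begin{itemize}
\item $P(U)=\prod_{n\in U}\mathbb{Z}$ for $U\subseteq\mathbb{N}$;
\item $P(\{\infty\}\cup C)=\mathbb{Z}\times\prod_{n\in C}\mathbb{Z}$, where the extra coordinate is multiplied by $(\min C')!/(\min C)!$ when restricting to $\{\infty\}\cup C'$, and discarded when restricting into $\mathbb{N}$.
\end{itemize}
Every $P(U)$ is reduced, so $r_\mathcal{C}P=P$. But the extra coordinates contribute a factor $\mathbb{Q}$ to the concrete stalk $P_\infty$. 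For each $q\in\mathbb{Q}$, take the family with germ $(q,0)$ at $\infty$ and $0$ at every $n$. Near $\infty$ it is given by the section $(m!\,q,0)\in P(\{\infty\}\cup\{m,m+1,\dots\})$ for large $m$, and near each $n$ by $0$. So it lies in $P^{+}(X)$, hence $\mathbb{Q}\hookrightarrow P^{+}(X)$ and $(r_\mathcal{C}P)^{+}\notin\mathbf{Psh}(X,\mathcal{C})$.

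The missing hypothesis is that $\mathcal{C}$ be closed in $\mathcal{A}$ under the concrete directed colimits that compute stalks, i.e.\ that $\mathcal{C}$ itself satisfy the standing assumption placed on $\mathcal{A}$. This is tacit in the paper and holds in all its examples (abelian, torsion-free, cancellative, ordered). Under it the two notions of stalk coincide, Step 1 follows from product and subobject closure, and your Step 2 is the paper's proof verbatim.
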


\begin{proof}

Given the assumptions on $\mathcal{C}$, we have that $(r_\mathcal{C}F)^+\in \mathbf{Sh}(X, \mathcal{C})$. We will prove that 

$$\xymatrix{F\ar[rr]^{\theta^{F,\mathcal{C}}} &   &   r_\mathcal{C}F\ar[rr]^{p^{r_\mathcal{C},F}} &   &  (r_\mathcal{C}F)^{+}}$$

is the $\mathbf{Sh}(X, \mathcal{C})-$reflection of $F$.  Indeed, let $G\in \mathbf{Sh}(X, \mathcal{C})$ and let $\theta\colon F\longrightarrow  G$  be a natural transformation, then we have the following commutative diagram

$$\xymatrix{ F\ar[rrdd]^{\theta}\ar[dd]_{\theta^{F, \mathcal{C}}}&   &    \\  &   &\\
             r_\mathcal{C}F\ar[rr]^{\mathcal{C}(\theta)} \ar[dd]_{p^{r_\mathcal{C}F}}
               &   &G \\ &    &\\
             (r_\mathcal{C}F)^{+},\ar[rruu]_{\sigma_{\mathcal{C}(\theta)}} &   &\\
             }$$ which yields the desired result.

\end{proof}
 
 Although Theorem \ref{303} is given for subcategories of $\mathcal{SET}$ or $\mathcal{GRP}$ it can be extended to categories with products, co-limits, and where bijective morphisms are isomorphisms. For example, the category $\mathbf{Por}(\leq)$ of preordered sets to which the technique used in Section \ref{1932} can be applied.

Note that the realization of the sheafification of $F \in \mathbf{Psh}(X, \mathcal{A})$ depends on the existence of colimits and products, closure under subobjects, and the property that bijective morphisms are isomorphisms. This is the case, for example, for categories such as preordered sets, rings, semigroups, and modules. However, this does not hold in general for topological categories, since in such categories bijective morphisms are not necessarily isomorphisms.
\begin{ejem}
Let $\mathcal{A}=\mathcal{GRP}$ and $\mathcal{C}=\mathcal{AB}$ the category of the abelian  groups. It is known that $\mathcal{AB}$ is a reflective subcategory of $\mathcal{GRP}$ which are closed under taking products and subgroups. Theorem \ref{303} implies that $\mathbf{Sh}(X, \mathcal{AB})$ is a reflective subcategory of $\mathbf{Psh}(X, \mathcal{GRP})$. This result holds if we replace topological groups with topological semigroups with open shifts (see \cite{arzusa2025group}).
\end{ejem}

\begin{ejem}
    Let $\mathcal{ABT}$ be the fully subcategory  of $\mathcal{AB}$. $\mathcal{ABT}$ is a subcategory of $\mathcal{AB}$ which is closed under taking small products and subgroups, therefore,  $\mathbf{Sh}(X, \mathcal{ABT})$ is a reflective subcategory of $\mathbf{Psh}(X, \mathcal{AB})$.
\end{ejem}

\begin{ejem}
    Let $\mathcal{SEM}$ the category of semigroups and $\mathcal{CSEM}$ the full subcategory of cancellative semigroups  (see \cite{arzusa2025group}).  $\mathcal{CSEM}$ is a reflective subcategory that satisfies the hypotheses of the  Theorem \ref{303}. Thus, $\mathbf{Sh}(X, \mathcal{CSEM})$ is a reflective subcategory of $\mathbf{Psh}(X, \mathcal{SEM})$.
\end{ejem}

\begin{ejem}
    Let $\mathbf{Por}(\leq)$ the category of preordered sets. If  $\mathbf{Or}(\leq)$ is the full subcategory of ordered sets. Then $\mathbf{Or}(\leq)$ is a reflective subcategory of $\mathbf{Por}(\leq)$ that satisfies  the hypotheses of the  Theorem \ref{303}. Thus, $\mathbf{Sh}(X, \mathbf{Or}(\leq))$ is a reflective subcategory of $\mathbf{Psh}(X, \mathbf{Por}(\leq))$.
\end{ejem}

Let $\mathcal{C}$ be  a reflective subcategory of a category $\mathcal{A}$. We say that the reflection functor $r_\mathcal{C} \colon \mathcal{A}\longrightarrow \mathcal{C}$ preserves small products if there exists an isomorphism $w\colon r_\mathcal{C}\prod_{x\in I}A_x\longrightarrow \prod_{x\in I} r_\mathcal{C}A_x$ such that the following diagram 

$$\xymatrix{r_\mathcal{C} \prod_{x\in I}A_i\ar[rrrr]^{w} &    &   &  &\prod_{x\in I}r_\mathcal{C}A_x\\
 &   &   &   &\\
 &  &  \prod_{x\in I} A_x\ar[uull]^{r_\mathcal{C}^{\prod_{x\in I}A_x}}\ar[uurr]_{\prod_{x\in I}r_\mathcal{C}^{A_x}}&   &}$$ commutes (see \cite{husek1987preservation}).\\

The following definition (equalizer) is very important because characterizes the concept of sheaf.
\begin{dfn}
   
Let $\mathcal{A}$ be a category (not necessarily a concrete category) and let $f,g : A \to B$ be morphisms in $\mathcal{C}$.  
An \emph{equalizer} of $f$ and $g$ is a morphism $e : E \to A$ such that:

\begin{enumerate}
    \item $f \circ e = g \circ e$,
    \item for any object $M$ and any morphism $h : M \to A$ with 
    $f \circ h = g \circ h$, there exists a unique morphism 
    $u : X \to E$ such that $h = e \circ u$.
\end{enumerate}
\end{dfn}

Let  $\mathcal{C}$ be a reflective subcategory of $\mathcal{A}$ (not necessarily concrete categories). If $r_\mathcal{C}^{A}$ is a monomorphism for every $A\in \mathcal{A}$ and $e\colon E \to A$ is an equalizer of $f,g\in Hom_{\mathcal{A}}(A,B)$ in $\mathcal{A}$, then $r_\mathcal{C}(e)\colon r_\mathcal{C}E\to r_\mathcal{C}A$ is an equalizer of $r_\mathcal{C}(f),r_\mathcal{C}(g)\colon r_\mathcal{C}A\to r_\mathcal{C}B$.\\

Let $F\in \mathbf{Psh}(X, \mathcal{A})$ and let  $\mathcal{C}$ be a reflective subcategory of $\mathcal{A}$, where  $\mathcal{A}\in \{\mathcal{SET}, \mathcal{GRP}\}$. Let $U$ be open in $X$ and $U=\bigcup_{i\in I}U_i$ be an open cover of $U$. Let us define $\mathbf{a}^{F}\colon F(U)\longrightarrow \prod_{i\in I}F(U_i)$, by $\mathbf{a}^F(s)=(F_{U_i}^{U}(s))_{i\in I}$, for each $s\in F(U)$. Let us define $\mathbf{c}^F\colon\prod_{i\in I}F(U_I)\longrightarrow \prod_{(i,j)\in I\times I}F(U_i\cap U_j)$ by $\mathbf{c}^F((s_i)_{i\in I})=(F_{U_i\cap U_i}^{U_j}(s_j))_{(i,j)\in I\times I}$, for each $(s_i)_{i\in I}\in \prod_{i\in I}F(U_i)$.  Let us define $\mathbf{b}^F\colon\prod_{i\in I}F(U_i)\longrightarrow \prod_{(i,j)\in I\times I}F(U_i\cap U_j)$ by $\mathbf{b}^{F}((s_i)_{i\in I})=(F_{U_i\cap U_i}^{U_j}(s_i))_{(i,j)\in I\times I}$, for each $(s_i)_{i\in I}\in \prod_{i\in I}F(U_i)$. Then $F$ is a sheaf if and only if $\mathbf{a}^F$ is an equalizer of $(\mathbf{b}^F, \mathbf{c}^F)$ (see \cite{tennison1975sheaf}).\\
Suppose that $\mathcal{C}$ is a reflective subcategory of $\mathcal{A}$ and the funtor induced by reflection, $r_\mathcal{C}$, preserves small products. Suppose that $r_\mathcal{C}A$ is a monomorphism for each $A\in Ob(\mathcal{A})$, therefore, $r_\mathcal{C}$ preserves equalizer. Then if $F$ is a sheaf, $r_\mathcal{C}(\mathbf{a}^F)\colon r_\mathcal{C}F(U)\longrightarrow \prod_{i\in I}r_\mathcal{C}F(U_i) $ is an equalizer of  $r_\mathcal{C}(\mathbf{c}^F)\colon\prod_{i\in I}r_\mathcal{C}F(U_i)\longrightarrow \prod_{(i,j)\in I\times I}r_\mathcal{C}F(U_i\cap U_j)$ and   $r_\mathcal{C}(\mathbf{b}^F)\colon\prod_{i\in I}r_\mathcal{C}F(U_i)\longrightarrow \prod_{(i,j)\in I\times I}r_\mathcal{C}F(U_i\cap U_j)$. This means $r_\mathcal{C}F$ is a sheaf. From Proposition \ref{2331} and Theorem \ref{253}, we have the following result.

\begin{teo}\label{3031}Suppose that $\mathcal{A}$ and $\mathcal{C}$ have equalizers and $\mathcal{A}$ is closed under taking subobjects. Then if $r_\mathcal{C}$ preserves small products and $r_\mathcal{C}^{A}$ is a monomorphism for any $A\in Ob(\mathcal{A})$, then $\mathbf{Sh}(X,\mathcal{C})$ is a reflective subcategory of $\mathbf{Psh}(X, \mathcal{A})$.
    
\end{teo}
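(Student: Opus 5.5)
The plan is to exhibit, for each $F\in \mathbf{Psh}(X,\mathcal{A})$, the composite
$$\xymatrix{F\ar[rr]^{p^{F}} & & F^{+}\ar[rr]^{\theta^{F^{+},\mathcal{C}}} & & r_\mathcal{C}(F^{+})}$$
as the $\mathbf{Sh}(X,\mathcal{C})$-reflection of $F$. This reverses the order used in Theorem \ref{303}: we sheafify first in $\mathcal{A}$ and then reflect objectwise. The advantage is that we no longer need $\mathcal{C}$ to be closed under products and subobjects. Instead we use the hypotheses that $r_\mathcal{C}$ preserves small products and that each unit $r_\mathcal{C}^{A}$ is a monomorphism.

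\textbf{Step 1.} Since $\mathcal{A}$ is closed under subobjects, and by the standing assumption of this part of Section \ref{5}, $F^{+}\in \mathbf{Sh}(X,\mathcal{A})$ (Lemma \ref{193} and the proposition following it). By Theorem \ref{253}, $r_\mathcal{C}(F^{+})\in \mathbf{Psh}(X,\mathcal{C})$ and $\theta^{F^{+},\mathcal{C}}$ is natural.

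\textbf{Step 2.} We show that $r_\mathcal{C}(F^{+})$ is a sheaf. Fix an open cover $U=\bigcup_{i\in I}U_i$. Because $F^{+}$ is a sheaf, $\mathbf{a}^{F^{+}}$ is an equalizer of $(\mathbf{b}^{F^{+}},\mathbf{c}^{F^{+}})$. Since every $r_\mathcal{C}^{A}$ is a monomorphism, the remark preceding the statement gives that $r_\mathcal{C}(\mathbf{a}^{F^{+}})$ is an equalizer of $r_\mathcal{C}(\mathbf{b}^{F^{+}})$ and $r_\mathcal{C}(\mathbf{c}^{F^{+}})$. Preservation of small products (the isomorphisms $w$) then identifies these three maps with $\mathbf{a}^{r_\mathcal{C}F^{+}}$, $\mathbf{b}^{r_\mathcal{C}F^{+}}$ and $\mathbf{c}^{r_\mathcal{C}F^{+}}$. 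To check this identification, compose with the product projections and use the uniqueness clause of the reflection together with the commuting triangle defining $w$. Hence $r_\mathcal{C}(F^{+})\in \mathbf{Sh}(X,\mathcal{C})$.

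\textbf{Step 3 (universal property).} Let $G\in\mathbf{Sh}(X,\mathcal{C})$ and let $\theta\colon F\to G$ be natural. Since $G$ is also a sheaf in $\mathcal{A}$, Theorem \ref{2531} gives a unique $\sigma_\theta\colon F^{+}\to G$ with $\sigma_\theta\circ p^F=\theta$. Theorem \ref{253} then gives a unique $\mathcal{C}(\sigma_\theta)\colon r_\mathcal{C}(F^{+})\to G$ with $\mathcal{C}(\sigma_\theta)\circ\theta^{F^{+},\mathcal{C}}=\sigma_\theta$. Their composite factors $\theta$ through the proposed unit.

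For uniqueness, suppose $\tau\colon r_\mathcal{C}(F^{+})\to G$ also satisfies $\tau\circ\theta^{F^{+},\mathcal{C}}\circ p^F=\theta$. By the uniqueness in Theorem \ref{2531}, $\tau\circ\theta^{F^{+},\mathcal{C}}=\sigma_\theta$. By the uniqueness in Theorem \ref{253}, $\tau=\mathcal{C}(\sigma_\theta)$.

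The main obstacle is Step 2. One must verify carefully that the product-preservation isomorphism $w$ transports $r_\mathcal{C}(\mathbf{a}^{F^{+}})$, $r_\mathcal{C}(\mathbf{b}^{F^{+}})$ and $r_\mathcal{C}(\mathbf{c}^{F^{+}})$ exactly onto the sheaf-condition maps of $r_\mathcal{C}F^{+}$. One must also check that the equalizer criterion from \cite{tennison1975sheaf} can be applied to a presheaf valued in $\mathcal{C}$, which is why $\mathcal{C}$ is assumed to have equalizers. If that criterion is only available in concrete form, I would use the fact that $\mathcal{C}\subseteq\mathcal{A}\in\{\mathcal{SET},\mathcal{GRP}\}$ is concrete and check axioms 1 and 2 elementwise from the equalizer property, testing against maps from a free object (a point or $\mathbb{Z}$) to extract elements.
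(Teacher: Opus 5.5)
Steps 1 and 3 are fine, and your route (sheafify in $\mathcal{A}$, then reflect objectwise, with unit $\theta^{F^{+},\mathcal{C}}\circ p^{F}$) is exactly the paper's. But Step 2 has a genuine gap, and it is not the bookkeeping with $w$ that you flag. The remark you invoke says that monic units $r_\mathcal{C}^{A}$ make $r_\mathcal{C}$ send equalizers to equalizers. That remark is false: a reflector is a left adjoint and need not preserve limits, and a monic unit gives no way to pull a test map $h\colon M\to r_\mathcal{C}A$ back into $A$.

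Here is a counterexample. Let $\mathcal{A}$ be the cancellative commutative monoids (the monoid version of the paper's $\mathcal{ABC}$, so that equalizers exist), $\mathcal{C}=\mathcal{AB}$, and $r_\mathcal{C}=K$ the group of differences. Every hypothesis holds:
\begin{itemize}
\item $M\to K(M)$ is injective;
\item $\mathcal{A}$ is closed under submonoids;
\item $K(\prod_i M_i)\to\prod_i K(M_i)$, $[(a_i),(b_i)]\mapsto(a_i-b_i)_i$, is bijective by cancellativity.
\end{itemize}
Yet for $f,g\colon\mathbb{N}^2\to\mathbb{Z}$ with $f(m,n)=m$ and $g(m,n)=-n$, the equalizer in $\mathcal{A}$ is $\{(0,0)\}$. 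The equalizer of $K(f),K(g)\colon\mathbb{Z}^2\to\mathbb{Z}$ is $\{(m,-m)\}\cong\mathbb{Z}$.

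This breaks the sheaf property itself, not just a lemma. Take $X=\{a,b,c\}$ with open sets $\emptyset,\{c\},\{a,c\},\{b,c\},X$. Put $F(\{c\})=\mathbb{Z}$, $F(\{a,c\})=F(\{b,c\})=\mathbb{N}$ and $F(X)=F(\emptyset)=0$, with restrictions $n\mapsto n$ and $n\mapsto -n$ into $F(\{c\})$. The only cover imposing a nontrivial condition is $\{\{a,c\},\{b,c\}\}$ of $X$. A compatible pair $(m,n)\in\mathbb{N}^2$ has $m=-n$, hence $m=n=0$, so $F$ is a sheaf and $F^{+}\cong F$.

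In $r_\mathcal{C}(F^{+})\cong K\circ F$, however, the pair $(1,-1)\in\mathbb{Z}\times\mathbb{Z}$ is compatible, since both restrict to $1$ on $\{c\}$. It has no gluing in $K(F(X))=0$. So $r_\mathcal{C}(F^{+})\notin\mathbf{Sh}(X,\mathcal{C})$: your candidate unit does not even land in the subcategory, and no identification via $w$ can repair that. The same example gives $(r_\mathcal{C}F)^{+}(X)\cong\mathbb{Z}$ against $r_\mathcal{C}(F^{+})(X)=0$.

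The paper's own proof rests on the same remark and has the same hole. What your argument actually needs is that $r_\mathcal{C}$ preserve equalizers. Together with small products, it then preserves all small limits, in particular the diagrams $(\mathbf{a},\mathbf{b},\mathbf{c})$, and your Steps 2 and 3 go through. Without that, the available route is the other order $F\to r_\mathcal{C}F\to(r_\mathcal{C}F)^{+}$ of Theorem \ref{303}. That route needs $\mathcal{C}$ closed under products and subobjects, which holds for $\mathcal{AB}$, where it gives the correct reflection.
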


\begin{proof} Let $F\in \mathbf{Psh}(X, \mathcal{A})$, then 
    under our assumptions we have that $r_\mathcal{C}(F^{+})\in \mathbf{Sh}(X, \mathcal{C})$. We will prove that 

    $$\xymatrix{  F\ar[rr]^{p^{F}}&   &   F^{+}\ar[rr]^{\theta^{F^{+},\mathcal{C}}} &    & r_\mathcal{C}(F^{+})}$$ is the $\mathbf{Sh}(X, \mathcal{C})$-reflection of $F$. Indeed, let $G\in Sh(X,\mathcal{C})$ and let $\theta F\longrightarrow G$ a natural transformation. Then we have the following commutative diagram

 $$\xymatrix{ F\ar[rrdd]^{\theta}\ar[dd]_{p^{F}}&   &    \\  &   &\\
             F^{+}\ar[rr]^{\sigma_\theta} \ar[dd]_{\theta^{F{{+},\mathcal{C}}}}
               &   &G \\ &    &\\
             r_\mathcal{C}(F^{+}),\ar[rruu]_{\mathcal{C}(\sigma_\theta )}&   &\\
             }$$ then we have the result.
\end{proof}

Suppose  $\mathcal{C}$ and $r_\mathcal{C}$ satisfy the hypotheses of theorems  \ref{303} and \ref{3031}. If $F\in \mathbf{Psh}(X, \mathcal{A})$, then $r_\mathcal{C}(F^{+})$ and $(r_\mathcal{C}F)^{+}$ are $\mathbf{Sh}(X,\mathcal{C})$-reflections of $F$, therefore,  $r_\mathcal{C}(F^{+})$ is isomorphic to $(r_\mathcal{C}F)^{+}$. Thus, we have the following result.

\begin{teo} Let $\mathcal{A}$ be  a category closed under taking subobjects.
    Let $\mathcal{C}$ a reflective subcategory of $\mathcal{A}$ with equalizers, such that the reflection functor preserves products and $r_\mathcal{C}^{A}$ is a monomorphism for each $A\in Ob(\mathcal{A})$. Then $(r_\mathcal{C}F)^{+}$ is isomorphic to $r_\mathcal{C}(F^{+})$.
\end{teo}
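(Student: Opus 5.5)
The plan is to identify both $(r_\mathcal{C}F)^{+}$ and $r_\mathcal{C}(F^{+})$ as $\mathbf{Sh}(X,\mathcal{C})$-reflections of $F$ in $\mathbf{Psh}(X,\mathcal{A})$. Then uniqueness of reflections up to isomorphism gives the claim. Throughout, $\mathcal{A}$ and $\mathcal{C}$ are as in the standing convention of this part of Section \ref{5}: subcategories of $\mathcal{SET}$ or $\mathcal{GRP}$ with $F^{+}\in\mathbf{Sh}(X,\mathcal{A})$ for every presheaf $F$, and $\mathcal{C}$ reflective in $\mathcal{A}$.

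\emph{First reflection.} I would apply Theorem \ref{3031}. Its hypotheses are exactly the present ones: equalizers in $\mathcal{C}$ (and in $\mathcal{A}$, as a subcategory of $\mathcal{SET}$ or $\mathcal{GRP}$ closed under subobjects), $r_\mathcal{C}$ preserving small products, and each $r_\mathcal{C}^{A}$ a monomorphism. The proof of that theorem shows that $\theta^{F^{+},\mathcal{C}}\circ p^{F}\colon F\to r_\mathcal{C}(F^{+})$ is a $\mathbf{Sh}(X,\mathcal{C})$-reflection. The key input is the discussion preceding Theorem \ref{3031}. Since $F^{+}$ is a sheaf, $\mathbf{a}^{F^{+}}$ equalizes $(\mathbf{b}^{F^{+}},\mathbf{c}^{F^{+}})$. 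Applying $r_\mathcal{C}$, which preserves equalizers and products, keeps this an equalizer, so $r_\mathcal{C}(F^{+})$ is a sheaf. The universal property then follows by composing Theorem \ref{2531} with Theorem \ref{253}.

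\emph{Second reflection.} For $(r_\mathcal{C}F)^{+}$ I would use Theorem \ref{303}. Its hypothesis is that $\mathcal{C}$ is closed under products and subobjects, which guarantees $(r_\mathcal{C}F)^{+}\in\mathbf{Sh}(X,\mathcal{C})$. I would derive this from the present hypotheses. Products: since $r_\mathcal{C}$ preserves products and $r_\mathcal{C}$ restricted to $\mathcal{C}$ is (isomorphic to) the identity, $\prod_x C_x$ formed in $\mathcal{A}$ is isomorphic to $r_\mathcal{C}\prod_x C_x\in\mathcal{C}$. Subobjects: let $S\hookrightarrow C$ with $C\in\mathcal{C}$. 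The inclusion factors as $S\xrightarrow{r_\mathcal{C}^S} r_\mathcal{C}S\to C$, so $r_\mathcal{C}^S$ is injective. The unit $r_\mathcal{C}^{S}$ is also a monomorphism by hypothesis. I then need to show it is surjective, hence an isomorphism in $\mathcal{SET}$ or $\mathcal{GRP}$, so that $S\in\mathcal{C}$. The paper's remark that $F^{+}(U)$ is a subobject of $\prod_{x\in U}(r_\mathcal{C}F)_x$ means I really only need closure for this particular subobject. Alternatively, I could show directly that $(r_\mathcal{C}F)^{+}(U)$ is an equalizer of maps between products of objects of $\mathcal{C}$, and use that $\mathcal{C}$ has equalizers computed as in $\mathcal{A}$. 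With $(r_\mathcal{C}F)^{+}$ a sheaf in $\mathcal{C}$, the argument of Theorem \ref{303} shows that $p^{r_\mathcal{C}F}\circ\theta^{F,\mathcal{C}}$ is a $\mathbf{Sh}(X,\mathcal{C})$-reflection, using Theorem \ref{253} followed by Theorem \ref{2531} applied in $\mathcal{C}$.

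\emph{Conclusion and main obstacle.} Two reflections of the same object into the same full subcategory are uniquely isomorphic via the mediating morphisms, which are natural transformations. This gives $(r_\mathcal{C}F)^{+}\cong r_\mathcal{C}(F^{+})$, naturally in $F$. The hard step is the second one: verifying that $(r_\mathcal{C}F)^{+}$ lands in $\mathbf{Sh}(X,\mathcal{C})$. I would first try the equalizer description, writing the local-section condition defining $F^{+}(U)$ as an equalizer in $\mathcal{C}$. If that fails, I would fall back on proving closure of $\mathcal{C}$ under subobjects from the monomorphism hypothesis on the units.
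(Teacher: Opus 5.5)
Your strategy matches the paper's. You exhibit $r_\mathcal{C}(F^{+})$ (via Theorem~\ref{3031}) and $(r_\mathcal{C}F)^{+}$ (via Theorem~\ref{303}) as $\mathbf{Sh}(X,\mathcal{C})$-reflections of $F$ and conclude by uniqueness of reflections.

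The gap is the step you flag as hard, $(r_\mathcal{C}F)^{+}\in\mathbf{Sh}(X,\mathcal{C})$, which you never establish. The paper does not derive it from the listed hypotheses either. The sentence before the theorem reads ``Suppose $\mathcal{C}$ and $r_\mathcal{C}$ satisfy the hypotheses of theorems~\ref{303} and~\ref{3031}''. So the paper tacitly also assumes that $\mathcal{C}$ is closed under products and subobjects in the sense Theorem~\ref{303} needs, namely $G^{+}\in\mathbf{Sh}(X,\mathcal{C})$ for every $G\in\mathbf{Psh}(X,\mathcal{C})$. Once you add that hypothesis, your second step is just Theorem~\ref{303} and your argument is the paper's.

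Trying to derive that hypothesis from the stated ones does not work.

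\textbf{The fallback.} You propose to show that every $\mathcal{A}$-subobject of an object of $\mathcal{C}$ lies in $\mathcal{C}$. This is false under the given hypotheses. Each $A$ is, via the monic unit $r_\mathcal{C}^{A}\colon A\to r_\mathcal{C}A$, a subobject of $r_\mathcal{C}A\in Ob(\mathcal{C})$, so the claim would force $\mathcal{C}=\mathcal{A}$. Concretely, the paper's own example $\mathcal{AB}\subseteq\mathcal{ABC}$ satisfies every listed hypothesis. Yet the additive semigroup of positive integers is an $\mathcal{ABC}$-subobject of the group of integers that is not a group. Its unit into its group of differences is injective but not surjective.

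\textbf{The equalizer route.} This is only a restatement of the problem. You do not name the two maps into an object of $\mathcal{C}$ whose equalizer would be $(r_\mathcal{C}F)^{+}(U)$. Membership in that set is an existential condition (``there exist $V$ and $t$''), built from germs and unions over local sections. That is colimit-type data, under which a reflective subcategory need not be closed.

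What actually puts $(r_\mathcal{C}F)^{+}(U)$ in $\mathcal{C}$ in the examples is a closure argument like Lemma~\ref{193}: locally-germ families are closed under the operations of $\mathcal{C}$, e.g.\ inverses. That is exactly the extra hypothesis of Theorem~\ref{303}, so state it rather than try to prove it.
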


\begin{ejem}
    Let $\mathcal{ABC}$ the category of the abelian cancellative semigroups (\cite[Section 1.2]{hernandez2023}). The category $\mathcal{AB}$ of the abelian groups is a reflective subcategory of $\mathcal{ABC}$. The reflection funtor $r_{\mathcal{AB}}$ preserves small products and   subcategory is reflective, preserves small products and $r_{\mathcal{AB}}^{A}$ is a monomorphism for each $A\in Ob(\mathcal{ABC})$. Therefore, for each $F\in \mathbf{Psh}(X, \mathcal{ABC})$,  $(r_{\mathcal{AB}}F)^{+}$ is isomorphic to $r_{\mathcal{AB}}(F^{+})$.
\end{ejem}

Let $\mathcal{KH}$ be the full subcategory of $\mathcal{TOP}$ consisting of compact Hausdorff spaces. This category is closed under small products, and every bijective morphism is an isomorphism. However, it is not closed under subobjects, which prevents us from applying the technique used so far to construct the sheafification.

Given $F \in \mathbf{Psh}(X, \mathcal{KH})$, we use $F^{+} \in \mathbf{Psh}(X, \mathcal{TOP})$ to construct the sheafification of $F$ in $\mathbf{Sh}(X, \mathcal{KH})$. Section \ref{1932} guarantees that constructing $F^{+}$ for presheaves with values in $\mathcal{TOP}$ is the same as in $\mathcal{SET}$.\\
It was shown that $F^{+} \in \mathbf{Sh}(X, \mathcal{TOP})$ if we endow each $F^{+}(U)$ with the subspace topology induced from $\prod_{x \in U} F_x$, where the latter is equipped with the product topology, for every open set $U \subseteq X$.\\
Define $F^{\ast}(U) = \overline{F^{+}(U)}$, for each open set $U \subseteq X$, where the closure is taken in $\prod_{x \in U} F_x$. Then $F^{\ast}(U) \in Ob(\mathcal{KH})$.\\
If $V \subseteq U$ are open sets in $X$, define $(F^{\ast})_V^U((s_x)_{x \in U}) = (s_x)_{x \in V}$. It follows from the definition that $(F^{+})_V^U$ is the restriction map of $(F^{\ast})_V^{U}$. By continuity of $(F^{\ast})_V^U$, we have $$(F^{\ast})_V^U(F^{\ast}(U)=(F^{\ast})_{V}^{U}\overline{F^+(U)}\subseteq \overline{(F^{\ast})_V^U(F^{+}(U))}=\overline{(F^{+})_V^{U}(F^{+}(U)}\subseteq \overline{F^{+}(V)}=F^{\ast}(V).$$

The following result is very well known in $\mathcal{TOP}$, it can be consulted in \cite{engelking1977general}.

\begin{prop} \label{54} \begin{itemize}
    
\item[i)] Let $f,g \colon Z\longrightarrow Y$ be continuous maps, where $Z$ and $Y$ are Hausdorff topological spaces. If $A\subseteq Z$ and $f(x)=g(x)$ for each $x\in A$, then $f(x)=g(x)$ for each $x\in \overline{A}$.

\item[ii)] Let $f\colon Z\longrightarrow Y$  be a continuous bijection, where $Z$ and $Y$ compact Hausdorff topological spaces. Then $f$ is a homeomorphism (it means $f$ is  an isomorphism in $Hom_\mathcal{TOP}(X,Y)$.

\item[iii)] If $X$ is Hausdorff and $B$ is compact in $X$, then $B$ is closed in $X$.

\end{itemize}
\end{prop}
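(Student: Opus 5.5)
The three items of Proposition \ref{54} are standard facts of general topology, and I would prove them in the order iii), i), ii), since ii) uses iii).

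\emph{Item iii).} Let $B$ be compact in the Hausdorff space $X$, and fix $p\in X\setminus B$. For each $b\in B$, the Hausdorff property gives disjoint open sets $U_b\ni b$ and $W_b\ni p$. The family $\{U_b\}_{b\in B}$ covers $B$, so finitely many of them, say $U_{b_1},\dots,U_{b_n}$, already cover $B$. Then $W=\bigcap_{k=1}^n W_{b_k}$ is an open neighbourhood of $p$ disjoint from $B$. Since $p$ was arbitrary, $X\setminus B$ is open, and $B$ is closed.

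\emph{Item i).} Consider the set $E=\{z\in Z: f(z)=g(z)\}$; I will show it is closed in $Z$. Take $z\notin E$. Then $f(z)\neq g(z)$, and since $Y$ is Hausdorff there are disjoint open sets $O_1\ni f(z)$ and $O_2\ni g(z)$. The set $f^{-1}(O_1)\cap g^{-1}(O_2)$ is an open neighbourhood of $z$. At any point $w$ of it we have $f(w)\in O_1$ and $g(w)\in O_2$, so $f(w)\neq g(w)$; hence this neighbourhood misses $E$, and $E$ is closed. Equivalently, $E$ is the preimage of the closed diagonal of $Y\times Y$ under $(f,g)$. Since $A\subseteq E$ and $E$ is closed, $\overline{A}\subseteq E$. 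Note that only the Hausdorff property of $Y$ is needed here, not that of $Z$.

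\emph{Item ii).} Since $f$ is a continuous bijection, it suffices to show that $f$ is a closed map; then $f^{-1}$ is continuous. Let $C\subseteq Z$ be closed. A closed subset of a compact space is compact, so $C$ is compact. The continuous image $f(C)$ is then compact in $Y$, and by iii), since $Y$ is Hausdorff, $f(C)$ is closed. Thus $f$ is closed, hence a homeomorphism, i.e.\ an isomorphism in $\mathcal{TOP}$.

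\textbf{Main obstacle.} None of the steps is a real difficulty; the only substantive point is the finite subcover argument in iii), on which ii) depends. The statement could also simply be cited from \cite{engelking1977general}, as the paper indicates.
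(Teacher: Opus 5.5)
Your proof is correct and is essentially the paper's approach: the paper gives no argument of its own for Proposition \ref{54} and simply cites \cite{engelking1977general}, and your three arguments (finite subcover for iii), closedness of the set $\{z\in Z: f(z)=g(z)\}$ for i), and continuous bijections from compact to Hausdorff spaces being closed maps for ii)) are exactly the standard proofs being cited. Your remark that i) needs only $Y$ Hausdorff is accurate, and likewise your argument for ii) uses only that $Z$ is compact and $Y$ is Hausdorff.
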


Proposition \ref{54} $i)$ implies that the following diagram 

$$\xymatrix{ F(U)\ar[dd]_{F_V^{U}}\ar[rr]^{p_U} &    &   F^{\ast}(U)\ar[dd]^{(F^{\ast})_V^{U}}\\
&      &\\
F(V)\ar[rr]^{p_V} &    &  F^{\ast}(V) } $$ commutes. Which proves that $p^{F}\colon F\longrightarrow F^{\ast}$ is a  natural transformation.

Now, if $U$ is an open set in $X$, then $\mathbf{a}^{F^{+}}$ is the restriction of $\mathbf{a}^{F^\ast}$ in $F^{+}(U)$. The same applies to $\mathbf{b}^{F^{+}}$ and $\mathbf{c}^{F^{+}}$. Given that $\mathbf{a}^{F^{+}}$ is an equalizer of $(\mathbf{b}^{F^{+}}, \mathbf{c}^{F^{+}})$, Proposition \ref{54} $i)$ implies that $\mathbf{a}^{F^{\ast}}$ is an equalizer of $(\mathbf{b}^{F^{\ast}}, \mathbf{c}^{F^{\ast}})$, it means $F^{\ast}\in \mathbf{Sh}(X,\mathcal{KH})$. \\

If $F$ is $\mathbf{Sh}(X\mathcal{KH})$, then, of the proof of the Lemma \ref{223}, it follows that $p_U\colon F(U)\longrightarrow F^+(U)$ is a continuous bijection, therefore, $F^{+}(U)$ is compact, thus, by Proposition \ref{54} $iii)$, we have that $F^{+}(U)=F^\ast(U)$. So that, by Proposition \ref{54} $ii)$, $p_U\colon F(U)\longrightarrow F^{\ast}(U)$ is an isomorphism in $\mathcal{KH}$. A similar line of reasoning to that used in the proof of Theorem \ref{303} leads us to say that $F^{\ast}$ is the sheafification of $F$ with values in $\mathcal{KH}$.\\
We have constructed a sheafification, not only in $\mathcal{KH}$, but also in every subcategory $\mathcal{A}$ of $\mathcal{KH}$ or a subcategory of a topological algebraic category (such as topological semigroups or topological rings) with compact objects, closed by taking closed subobjects.\\The following Example exhibits an example of sheafification in categories with algebraic topological structures.

\begin{ejem}
    If $\mathcal{KS}$ is the category of the Hausdorff compact topological semigroups and $\mathcal{KG}$ the full subcategory of the Hausdorff compact topological groups. Then $\mathbf{Sh}(X,\mathcal{KG})$ is a reflective subcategory of $\mathbf{Psh}(X, \mathcal{KS})$.
\end{ejem}

\begin{proof}
    Let  $\mathcal{KSC}$ be the full subcategory of $\mathcal{KS}$  of the cancellative Hausdorff  compact topological semigroups, Which coincides with $\mathcal{KG}$ (see \cite{hernandez2021note}). These categories are closed under taking products and closed subobjects. Since the closure of a subgroup of a topological group is a subgroup, therefore, if $\mathcal{A}$ is a subcategory of $\mathcal{KG}$ closed under taking closed subgroups, then $F^{\ast}\in \mathbf{Sh}(X,\mathcal{KH})$ whenever $F\in \mathbf{Psh}(X,\mathcal{KH}). $ \\If $F\in \mathbf{Psh}(X,\mathcal{KSC})$, since $\mathcal{KSC}$ is a reflective subcategory of $\mathcal{KS}$, then $r_\mathcal{KSC}F\in \mathbf{Psh}(X, \mathcal{KSH})$. From the fact that $\mathcal{KSC}$ is closed under taking closed subobjects, then $(r_\mathcal{KG}F)^{\ast}\in \mathbf{Sh}(X,\mathcal{KG})$. A similar line of reasoning to that given in the proof of Theorem \ref{303} leads us to say that $(r_\mathcal{KG}F)^{\ast}$ is the $\mathbf{Sh}(X,\mathcal{GK})-$reflection on $F$. 
\end{proof}

Let $\mathcal{KGT}$ the full subcategory of $\mathcal{KG}$ of torsion-free compact topological groups. $\mathcal{KGT}$ is a reflective subcategory of  $\mathcal{KGT}$ closed under taking closed subgroups. Therefore, we have that $\mathbf{Sh}(X,\mathcal{KGT})$ is a reflective subcategory of $\mathbf{Psh}(X,\mathcal{KG})$. We can find a similar result for  full subcategories  of $\mathcal{KG}$  of abelian groups and totally disconnected groups. \\




\bibliographystyle{plain}
\bibliography{Biblio}

@book{agore2023first,
  title={A first course in category theory},
  author={Agore, Ana},
  year={2023},
  publisher={Springer}
}

@book{kashiwara2006,
   author     = "Kashiwara, M. and Schapira, P.",
   title      =    "Categories and sheaves",
   adress     =  "Berlin",
   publisher  =  "Springer",
   year       =     "2006"
}

@book{engelking1977general,
author="Engelking, Ryszard and others",
  title={General topology},
  adress="Berlin",
   volume={60},
    publisher={PWN Warszawa},
  year={1977},
 
}

@book{tennison1975sheaf,
 author="Tennison, B. R.",
  title= "Sheaf theory",
  adress= "Cambridge",
 publisher = "Cambridge University Press",
 year= "1975"
}

@book{maclane2012sheaves,
 author  =  "MacLane, S. and Moerdijk, I.",
  title   =   "Sheaves in geometry and logic: A first introduction to topos theory",
 year={2012},
  publisher={Springer Science \& Business Media}
}

@article{bunk2023homotopy,
  title={Homotopy Sheaves on Generalised Spaces},
  author={Bunk, Severin},
  journal={Applied Categorical Structures},
  volume={31},
  number={6},
  pages={49},
  year={2023},
  publisher={Springer}
}

@article{barnes2022equivariant,
  title={Equivariant sheaves for profinite groups},
  author={Barnes, David and Sugrue, Danny},
  journal={Topology and its Applications},
  volume={319},
  pages={108215},
  year={2022},
  publisher={Elsevier}
}

@article{cortes2023reflective,
  title={Reflective and coreflective subcategories},
  author={Cort{\'e}s-Izurdiaga, Manuel and Crivei, Septimiu and Saorin, Manuel},
  journal={Journal of Pure and Applied Algebra},
  volume={227},
  number={5},
  pages={107267},
  year={2023},
  publisher={Elsevier}
}

@article{goodbody2026approximable,
  title={Approximable Triangulated Categories and Reflexive DG-categories: I. Goodbody},
  author={Goodbody, Isambard},
  journal={Applied Categorical Structures},
  volume={34},
  number={3},
  pages={19},
  year={2026},
  publisher={Springer}
}

@article{barbosa2023sheaf,
  title={Sheaf representation of monoidal categories},
  author={Barbosa, Rui Soares and Heunen, Chris},
  journal={Advances in Mathematics},
  volume={416},
  pages={108900},
  year={2023},
  publisher={Elsevier}
}

@article{hernandez2020reflections,
  author  =  "Hernández-Arzusa, J.C. and Hernandez, S.",
  title   =    "Reflections in topological algebraic structures",
  
  journal= "Topology and its Applications",
  volume=  "281",
pages=   "107204",
  year= "2020"
}

@article{godement1973topologie,
  title={Topologie alg{\'e}brique et th{\'e}orie des faisceaux. Hermann, Paris, 1973. Troisieme {\'e}dition revue et corrig{\'e}e, Publications de l’Institut de Math{\'e}matique de l’Universit{\'e} de Strasbourg, XIII},
  author={Godement, Roger},
  journal={Actualit{\'e}s Scientifiques et Industrielles},
  volume={1252},
  year={1973}
}

@article{castillo2021dual,
  title={El dual de la reflexi{\'o}n de un grupo topol{\'o}gico.},
  author={Castillo, Adriana C and Hern{\'a}ndez Arzusa, Julio César},
  journal={Revista Integraci{\'o}n, temas de matem{\'a}ticas},
  volume={39},
  number={1},
  pages={23--40},
  year={2021}
}

@article{gray1965sheaves,
 author   =   "Gray, J.W.",
  title  =  "Sheaves with values in a category",
 
  journal  = "Topology",
  volume  =  "3",
 pages  =  "1--18",
  year  ="1965"
  }

@inproceedings{hernandez2021note,
  title={A note on locally compact subsemigroups of compact groups},
  author="Hern{\'a}ndez Arzusa, Julio César and Hofmann, Karl H",
  booktitle="Semigroup Forum",
  volume={103},
  number={1},
  pages={291--294},
  year={2021}
}

@article{tkachenko2014axioms,
  author={Tkachenko, Mikhail},
  title={Axioms of separation in semitopological groups and related functors},
journal={Topology and its Applications},
  volume={161},
  pages={364--376},
  year={2014},
  publisher={Elsevier}
}

@article{husek1987preservation,
  title={Preservation of products by functors close to reflectors},
  author={Husek, M and De Vries, J},
  journal={Topology and its Applications},
  volume={27},
  number={2},
  pages={171--189},
  year={1987},
  publisher={Elsevier}
}

@article{arzusa2025group,
  title={The group of characters of a pseudocompact locally compact semitopological semigroup},
  author="Hernández Arzusa, Julio C{\'e}sar",
  journal={Applied General Topology},
  volume={26},
  number={2},
  pages={681--689},
  year={2025}
}

@book{hernandez2023,
author="Hern{\'a}ndez Arzusa, Julio C{\'e}sar",
  title={Una introducci{\'o}n a grupos y semigrupos topol{\'o}gicos.},
   year={2023},
  publisher={Universidad de Cartagena}
}

\end{document}